\newtheorem{theorem}{Theorem}[section]
\newtheorem{lemma}[theorem]{Lemma}
\theoremstyle{definition}
\newtheorem{definition}[theorem]{Definition}
\newtheorem{example}[theorem]{Example}
\newtheorem{proposition}[theorem]{Proposition}
\newtheorem{corollary}[theorem]{Corollary}
\theoremstyle{remark}
\newtheorem{remark}[theorem]{Remark}
\numberwithin{equation}{section}
  \renewcommand{\cH}{{\mathcal H}}
  \newcommand{\cE}{{\mathcal E}}
   \newcommand{\cN}{{\mathcal N}}
  \newcommand{\cU}{{\mathcal U}}
   \newcommand{\cV}{{\mathcal V}}
    \newcommand{\cW}{{\mathcal W}}
  \newcommand{\cG}{{\mathcal G }}
  \newcommand{\cC}{{\mathcal C }}
\newcommand{\cT}{{\mathcal T }}
\newcommand{\XX}{{\mathfrak X}}
   \newcommand{\ba}{\begin{eqnarray}}
   \newcommand{\na}{\end{eqnarray}}
   \newcommand{\ban}{\begin{eqnarray*}}
   \newcommand{\nan}{\end{eqnarray*}}
  \newcommand{\C}{{\mathbb C}}
  \newcommand{\R}{{\mathbb R}}
  \newcommand{\Z}{{\mathbb Z}}
  \newcommand{\Q}{{\mathbb Q}}
\begin{document}

\title[Gromov-Witten invariant of Weighted Blow-up]{Orbifold Gromov-Witten invariants of Weighted Blow-up\\ at Smooth Points}



  \author[Weiqiang He]{Weiqiang He}
  \address{Department of Mathematics\\  Sun Yat-Sen University\\
                        Guangzhou,  510275\\ China }
  \email{btjim1987@gmail.com}

  \author[Jianxun Hu] {Jianxun Hu$^1$}
  \address{Department of Mathematics\\  Sun Yat-Sen University\\
                        Guangzhou,  510275\\ China }
  \email{stsjxhu@mail.sysu.edu.cn}
\thanks{${}^1$Partially supported by NSFC Grant 11228101}

\subjclass[2010]{Primary: 57R19, 19L10, 22A22. Secondary: 55N15,53D45.}

\date{}

\dedicatory{}
\keywords{Gromov-Witten invariant, orbifold, Weighted blow-up}

\begin{abstract}
 In this paper, one considers the change of orbifold Gromov-Witten invariants under weighted blow-up at smooth points. Some blow-up formula for Gromov-Witten invariants of symplectic orbifolds is proved. These results extend the results of manifolds case to orbifold case.

\end{abstract}

\maketitle

\tableofcontents

\section{Introduction}
The theory of Gromov-Witten invariant or quantum cohomology is probably one of the most important theories in mathematical physics
(especially in the string theory), and it has many applications in algebraic geometry and symplectic geometry.
Roughly speaking, given a symplectic manifold ($M$, $\omega$), Gromov-Witten invariant of $M$ is the number of pseudo-holomophic curves intersecting with some fixed homology classes of $M$.
There have been a great deal of activities to establish the mathematical foundation of the theory of quantum cohomology or Gromov-Witten invariants.
Y. Ruan and G. Tian \cite{RT} first established for semi-positive symplectic manifolds.
Then semi-positivity condition has been removed by many authors such as Li-Tian\cite{LT}, Fukaya-Ono\cite{FO}, Ruan\cite{R} and so forth.
In 2001, Li-Ruan \cite{LR} defined the relative Gromov-Witten invariants and established the degeneration formula.
Via this formula, we calculate Gromov-Witten invariants of $M$ when it can be `symplecticly' cut into two symplectic manifolds.
On the basis of degeneration formula, The second named author \cite{Hu} set up a blow-up type formula of Gromov-Witten invariants,
which tells the relations between Gromov-Witten invariants of a symplectic manifold M and some special invariants of its blow-ups at a smooth point or along a smooth curve.

Orbifolds, which were firstly introduced by I. Sataki \cite{Sat} in 1956, are a kind of generalization of manifolds.
Roughly speaking, an orbifold is a manifold equipping with some local group action.
During last few years, symplectic geometers pay more and more attentions to the category of orbifold.
They worked out that many symplectic surgeries of manifolds (such as symplectic cutting, symplectic gluing, blowing up and flops) can be generalized to orbifolds.
Moreover, numerous new characteristic emerges in the orbifold category because of the local group action.
Chen-Ruan \cite{CR1} established a new cohomology theory called Chen-Ruan cohomology, which is a good generalization of ordinary cohomology.
In 2000, Chen-Ruan \cite{CR2} generalized the quantum cohomology theory to orbifold and established the orbifold Gromov-Witten theory.
In 2010, B. Chen together with his collaborators \cite{CLSZ} defined the relative orbifold Gromov-Witten invariants and generalized the degeneration formula to the category of orbifolds.
In algebraic geometry, Abramovich and Fantechi \cite{AF} also obtained a similar degenertion formula.

In this paper, we will follow \cite{Hu} to study the change of orbifold Gromov-Witten invariants under weighted blow-up at smooth points. We will construct weighted blow-up in terms of symplectic cutting as in \cite{G} and use the degeneration formula to extend some blow-up formula of \cite{Hu} to the orbifold case.

Throughout this paper, let $\mathcal{G}$ be a compact symplectic orbifold (groupoid) of dimension 2n,
$\widetilde{\mathcal{G}}$ be the weighted blow-up of $\mathcal{G}$ at a smooth point.
Denote by $p:\widetilde{\mathcal{G}}\rightarrow\mathcal{G}$ the natural projection (cf. Remark 2.13 (3)).
Denote by $\Psi^{\mathcal{G}}_{(A,g,m,(\mathbf{g}))}(\alpha_1,\ldots,\alpha_m)$ the genus $g$ Gromov-Witten invariants of $\mathcal{G}$ with degree $A$,
$\Psi^{\mathcal G}_{(A,m,(\mathbf{g}))}(\alpha_1,\ldots, \alpha_m)$ the genus 0 GW invariants of $\mathcal G$ with degree $A$.
In this paper, we establish some relations between Gromov-Witten invariants of $\mathcal{G}$ and its blow-up $\widetilde{\mathcal{G}}$.
More precisely, we showed

\begin{theorem}\label{thm1-1}
Suppose that $\mathcal {G}$ is a compact symplectic orbifold of dimension $2n$ and $p: \widetilde{\mathcal G}\longrightarrow {\mathcal G}$ is the weighted blow-up of $\mathcal G$ at smooth point. $\alpha_i\in H^*_{CR}(\mathcal G)$, $i=1,2,\cdots, m$. Then for genus $g\leq 1$, $n\geq 2$, we have
$$
      \Psi^{\mathcal{G}}_{(A,g,m,(\mathbf{g}))}(\alpha_1,\ldots,\alpha_m)=\Psi^{\widetilde{\mathcal{G}}}_{(p^!(A),g,m,(\mathbf{g}))}(p^*\alpha_1,\ldots,p^*\alpha_m),
$$
where $p^!(A)= PDp^*PD(A)$, $PD$ stands for the Poincare dual.
\end{theorem}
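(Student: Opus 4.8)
The plan is to follow the strategy of the second author's manifold-case argument \cite{Hu}, transported into the orbifold setting via the degeneration formula of Chen--Li--Sun--Zhao \cite{CLSZ}. The starting point is to realize the weighted blow-up $\widetilde{\mathcal G}$ through a symplectic cutting construction (as in \cite{G}): choose a small invariant symplectic ball neighbourhood of the smooth point, and perform a symplectic cut with weights to obtain a degeneration of $\mathcal G$ into $\widetilde{\mathcal G} \cup_{Z} \mathcal P$, where $\mathcal P$ is a weighted projective space orbifold (the "exceptional piece") and $Z$ is the divisor along which the two pieces are glued — a weighted projective space of one lower dimension. First I would set up this degeneration carefully and record the relevant homology data: how $p^!(A)$ decomposes, and the fact that $p^*\alpha_i$ restricted to the exceptional piece is pulled back from a point, hence (for $\alpha_i$ of positive degree) carries no interesting information on $\mathcal P$.

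Next I would apply the degeneration formula to both sides. For $\mathcal G$ with insertions $\alpha_1,\dots,\alpha_m$, the formula expresses $\Psi^{\mathcal G}$ as a sum over splittings of the curve class, the genus, the marked points, and over a basis of relative cohomology classes on the divisor $Z$, of products of relative invariants of the two pieces. The key point — exactly as in \cite{Hu} — is a dimension/degree count showing that in this sum only the "trivial" term survives: the contribution in which the entire curve, all marked points and all of the class $A$ go to the $\widetilde{\mathcal G}$-side, and the $\mathcal P$-side contributes only a constant (degree-zero, genus-zero) relative invariant equal to $1$. Every other splitting is forced to vanish because the relative invariant of the weighted projective piece $\mathcal P$ with the pulled-back (positive-degree, or trivial) insertions has the wrong dimension, or because a nontrivial curve class on the $\mathcal P$-side would violate the virtual dimension constraint. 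Running the same degeneration on $\widetilde{\mathcal G}$ (trivially: blowing up a point already on $\widetilde{\mathcal G}$, or simply observing $\widetilde{\mathcal G}$ relative to $Z$ appears on both sides) identifies the surviving relative invariant of $\widetilde{\mathcal G}$ with $\Psi^{\widetilde{\mathcal G}}_{(p^!(A),g,m,(\mathbf g))}(p^*\alpha_1,\dots,p^*\alpha_m)$, yielding the claimed equality.

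The restriction to $g \le 1$ and $n \ge 2$ is precisely what makes the vanishing argument work: for $g \le 1$ the relevant moduli spaces on the weighted projective piece are controlled well enough that the dimension count excludes all nontrivial bubbling into $\mathcal P$, and $n \ge 2$ guarantees the exceptional divisor and the cut locus $Z$ have enough codimension for the pullback classes $p^*\alpha_i$ to kill the extra contributions. I expect the main obstacle to be exactly this vanishing step: one must handle the orbifold structure of $\mathcal P$ and $Z$ (weighted projective spaces, whose Chen--Ruan cohomology has many twisted sectors), verify that the ages/degree shifts of these twisted sectors enter the virtual dimension formula in the expected way, and check that no twisted-sector relative invariant of $\mathcal P$ sneaks in with the right dimension to contribute. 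The genus-$1$ case will require a separate, slightly more delicate count than genus $0$, since the relevant invariants of $\mathcal P$ are no longer simply enumerative. Granting these vanishing lemmas, the remainder is bookkeeping with the degeneration formula and the definition of $p^!$.
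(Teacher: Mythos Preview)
Your outline is essentially the paper's own strategy: apply the orbifold degeneration formula of \cite{CLSZ} to the weighted symplectic cut of $\mathcal G$ at the smooth point, argue that the insertions $\alpha_i$ can be chosen to vanish on the $W\mathbb{P}(m_1,\dots,m_n,1)$-side so that $m^+=0$, and then run a virtual-dimension count (the paper's Lemma~\ref{noneq}, using the Chern class formula $c_1(TW\mathbb{P}(m_1,\dots,m_n,1))=(\sum m_i+1)\eta_E$ and the explicit ages of twisted sectors of $\mathcal Z=W\mathbb{P}(m_1,\dots,m_n)$) to kill every term with $k>0$ relative marks. The conditions $g\le 1$, $n\ge 2$ enter exactly where you say, to make $(3-n)(g^+-l^+)+k\ge 0$.

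The one place your sketch is too loose is the second comparison, relating $\Psi^{\widetilde{\mathcal G}}$ to the relative invariant of $(\widetilde{\mathcal G},\mathcal Z)$. This is \emph{not} done by ``blowing up a point already on $\widetilde{\mathcal G}$'' nor by a tautology; the paper performs a \emph{second} symplectic cut on $\widetilde{\mathcal G}$, with trivial weight, along a parallel copy of the exceptional divisor. The resulting plus-piece is $\mathbb{P}(\mathcal O_{\mathbf m}(-1)\oplus\mathcal O)\cong \widetilde{W\mathbb{P}}(m_1,\dots,m_n,1)$, a $\mathbb{P}^1$-bundle over $\mathcal Z$ with two sections $E$ and $H$, and one needs a separate Chern class computation $c_1=(\sum m_i+1)\eta_H-(\sum m_i-1)\eta_E$ together with $p^!(A)\cdot E=0$ to reduce the dimension count to the \emph{same} inequality as before. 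So the second step is parallel to, not a corollary of, the first; budget for a second Chern class lemma and a repeat of the vanishing argument on this different exceptional piece.
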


If the (real) dimension of $\mathcal{G}$ is 4 or 6, then we can remove the genus condition and prove
\begin{theorem}\label{thm1-2}
   Under the assumption of Theorem \ref{thm1-1}.    If $1\leq n \leq 3$, then for any genus $g$, we have
$$
      \Psi^{\mathcal{G}}_{(A,g,m,(\mathbf{g}))}(\alpha_1,\ldots,\alpha_m)=\Psi^{\widetilde{\mathcal{G}}}_{(p^!(A),g,m,(\mathbf{g}))}(p^*\alpha_1,\ldots,p^*\alpha_m),
$$
where $p^!(A)=PDp^*PD(A)$, $PD$ stands for the Poincare dual.
\end{theorem}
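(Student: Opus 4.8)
The plan is to reduce both sides of the claimed identity to a relative Gromov--Witten computation via the degeneration formula of Chen--Li--Sun--Zhao \cite{CLSZ}, exactly mirroring the strategy of \cite{Hu} in the manifold case, and then to dispose of the genus restriction by a dimension count. First I would realize the weighted blow-up $\widetilde{\mathcal G}$ through symplectic cutting at the chosen smooth point, following \cite{G}: near the point one performs a weighted symplectic cut, which degenerates $\mathcal G$ into the union $\widetilde{\mathcal G}\cup_{\mathcal E} \mathcal Z$, where $\mathcal E$ is the exceptional divisor (a weighted projective space orbifold) and $\mathcal Z$ is a weighted projective bundle-type piece (the ``cone'' end) whose relative GW theory with respect to $\mathcal E$ is explicitly understood. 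Applying the degeneration formula to $\Psi^{\mathcal G}_{(A,g,m,(\mathbf g))}(\alpha_1,\dots,\alpha_m)$ expresses it as a sum over splittings of the curve class, the genus, the marked points, and over intermediate cohomology insertions along $\mathcal E$, of products of relative invariants of $(\widetilde{\mathcal G},\mathcal E)$ and of $(\mathcal Z,\mathcal E)$. The same is done for $\Psi^{\widetilde{\mathcal G}}_{(p^!(A),g,m,(\mathbf g))}(p^*\alpha_1,\dots,p^*\alpha_m)$ using a trivial degeneration (or the comparison already set up), so that matching the two reduces to showing that all terms in which a curve component genuinely enters $\mathcal Z$ with positive contact order, or carries nonzero class into the exceptional fibre, must vanish.

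The core of the argument is this vanishing. Since all insertions are pulled back via $p^*$ from $\mathcal G$, the constraints ``see'' nothing of the exceptional divisor, so a relative invariant of $(\widetilde{\mathcal G},\mathcal E)$ with a nontrivial $\mathcal E$-component or a relative invariant of $(\mathcal Z,\mathcal E)$ that is not the ``trivial'' one contributes to a moduli space whose (virtual) dimension, after imposing the pulled-back constraints, is strictly positive, while the evaluation/integration against those constraints forces it to be zero; equivalently, the degeneration sum collapses to the single ``main term'' in which the curve lies entirely in $\widetilde{\mathcal G}$, meets $\mathcal E$ at no point, and $p_*$ of its class is $A$. One then identifies $p^!(A)=PD\,p^*\,PD(A)$ as precisely the class on $\widetilde{\mathcal G}$ that pushes forward to $A$ and meets $\mathcal E$ trivially, and checks that $p^*$ induces an isomorphism of the relevant constraint data, so the surviving relative invariant of $(\widetilde{\mathcal G},\mathcal E)$ equals $\Psi^{\mathcal G}_{(A,g,m,(\mathbf g))}(\alpha_1,\dots,\alpha_m)$. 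This last identification is where the dimension hypothesis $1\le n\le 3$ enters: for $\dim_{\mathbb R}\mathcal G\le 6$ the exceptional divisor $\mathcal E$ has complex dimension $\le 2$, so any nonconstant component mapping into $\mathcal E$, or any bubble off $\mathcal E$ into $\mathcal Z$, is too rigid/too low-dimensional to absorb the codimension of the constraints and contributes nothing, whereas in higher dimensions such components can appear and the genus-$\le 1$ restriction of Theorem~\ref{thm1-1} is needed instead.

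In carrying this out I would: (1) set up the symplectic cut and describe $\mathcal Z$ and $\mathcal E$ with their twisted sectors, recording the virtual dimension formula for relative orbifold stable maps to $(\widetilde{\mathcal G},\mathcal E)$ and to $(\mathcal Z,\mathcal E)$, including the age shifts at orbifold relative markings; (2) write the degeneration formula for both sides and organize the sum by the ``type'' of the $\mathcal Z$-part; (3) prove the key vanishing lemma by the dimension count sketched above, using that $p^*\alpha_i$ restricts trivially to $\mathcal E$ and that $1\le n\le 3$ bounds $\dim_{\mathbb C}\mathcal E$; (4) identify the unique surviving term with $\Psi^{\mathcal G}_{(A,g,m,(\mathbf g))}(\alpha_1,\dots,\alpha_m)$ via the class identity $p^!(A)=PD\,p^*\,PD(A)$ and the behaviour of $p^*$ on Chen--Ruan cohomology. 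The main obstacle I anticipate is step (3): one must control all orbifold relative invariants of the ``cone'' piece $(\mathcal Z,\mathcal E)$ — in particular invariants with several relative marked points distributed among twisted sectors — and show that, once coupled through the gluing to constraints pulled back from $\mathcal G$, each non-main configuration has positive excess dimension. Handling the twisted-sector bookkeeping (ages, orders of isotropy at the relative markings, and the combinatorics of admissible weightings) so that the inequality is genuinely strict is the delicate point; the rest is a faithful orbifold translation of the manifold argument in \cite{Hu}.
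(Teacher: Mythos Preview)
Your plan is essentially the paper's own: two comparison theorems via the degeneration formula reduce both $\Psi^{\mathcal G}$ and $\Psi^{\widetilde{\mathcal G}}$ to the same relative invariant of $(\widetilde{\mathcal G},\mathcal Z)$, and a virtual-dimension inequality kills every non-main splitting. Two points deserve sharpening.

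First, your heuristic for why $n\le 3$ lifts the genus restriction (``$\dim_{\mathbb C}$ of the exceptional divisor is $\le 2$, so components are too rigid'') is not the actual mechanism the argument uses. In the paper the decisive quantity in the dimension count is $(3-n)(g^+-l^+)+k$, where $g^+$, $l^+$, $k$ are the total genus, number of connected components, and number of relative markings on the weighted-projective (``cone'') side. Since every component must carry at least one relative marking one has $k\ge l^+$, so for $n=2$ this expression is $g^+-l^++k\ge g^+\ge 0$ and for $n=3$ it is simply $k\ge 0$, regardless of $g^+$; for $n\ge 4$ it can be made arbitrarily negative by increasing $g^+$, which is why Theorem~\ref{thm1-1} needs $g\le 1$. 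So the work is done by the sign of $(3-n)$ in the virtual-dimension formula for maps into the $n$-dimensional cone piece, not directly by the dimension of the divisor. When you carry out your step~(3) you will need exactly this inequality together with the twisted-sector estimate $(\sum_i m_i)\ell_j+\iota^{\mathcal Z}_{(h_j^{-1})}>n-1$ for each relative marking, which the paper checks by an explicit computation in local weighted-projective coordinates.

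Second, be aware that the paper's actual proof (the theorem labelled ``main'' in Section~3 and the remark following it) only establishes the cases $n=2$ and $n=3$; the case $n=1$ is explicitly left open there, and indeed the inequality above fails for $n=1$ (take $g^+=0$, $l^+=k$). The statement as printed in the introduction is thus slightly stronger than what the argument delivers, and your plan will run into the same obstruction at $n=1$.
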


    \section{Preliminaries  }
 In this section, we will briefly review the notions of orbifold in terms of orbifold atlas and proper
 \'etale groupoids. Then we recall the definition of ordinary cohomology of orbifold and the Chen-Ruan
 cohomology. Next we will focus on a concrete example of orbifold, the weighted projective space and show how to construct weighted blow-up.
Finally we will introduce the Gromov-Witten theory of orbifold and the degeneration formula.
The main references for this section are \cite{ALR, CLSZ, CR1, CR2, G, Sat}.

\subsection{Orbifolds and orbifold groupoids} 

  Let $X$ be a  paracompact Hausdorff space.
An $n$-dimensional orbifold chart for an open subset $U$ of $X$ is a triple $(\tilde U, G, \pi)$ given by a connected open subset  $\tilde U \subset \R^n$,
together  with an effective smooth action of a finite group $G$ such that   $\pi:  \tilde U\to    U  $ is the induced quotient map.
 An embedding  of orbifold charts
 \[
 \phi_{ij}:   (\tilde U_i, G_i, \pi_i)  \hookrightarrow   (\tilde U_j, G_j, \pi_j)
 \]
   is a smooth embedding
$
 \phi_{ij}:    \tilde U_i   \hookrightarrow    \tilde U_j,
$ covering the inclusion $\iota_{ij}: U_i \hookrightarrow  U_j$.
 As shown in \cite{MoePr}, such an embedding induces an injective group
 homomorphism  $\lambda_{ij}:  G_i \to G_j$ such that $\phi_{ij}$ is $G_i$-equivariant in the sense that
 \[
 \phi_{ij}(x\cdot g) =   \phi_{ij}(x )  \cdot   \lambda_{ij}( g),
 \]
 for $x\in \tilde U_i$ and $g\in G_i$.

\begin{definition}   An orbifold  atlas  on  $X$  is    a collection of   orbifold charts   $\cU =\{(\tilde U_i, G_i, \phi_i, U_i)\}$  for an open covering  $\{U_i\} $  of $X$ such that
\begin{enumerate}
\item  $\{ U_i\}$ is closed under finite intersection.
\item  Given any inclusion $U_i \subset  U_j$, there is an embedding of orbifold charts
$\phi_{ij}:   (\tilde U_i, G_i, \pi_i, U_i)  \hookrightarrow   (\tilde U_j, G_j, \pi_j, U_j)$.
 \end{enumerate}
Two  orbifold  atlases $\cU$ and $\cV$ are equivalent if there is a common orbifold atlas $\cW$  refining  $\cU$ and  $\cV$.
An (effective) orbifold $\XX = (X, \cU)$   is a paracompact Hausdorff space $X$  with an equivalence class of  orbifold  atlases or a maximal orbifold atlas.   Given an orbifold  $\XX = (X, \cU)$ and a point $x\in X$, let $(\tilde U, G, \pi)$ be an orbifold chart around $x$, then the local group at $x$ is defined to   be the
stabilizer of  $\tilde x \in \pi^{-1}(x)$, uniquely defined up to conjugation.
\end{definition}

There is also the conception of orbibunlde, which is the generalization of vector bundle:

\begin{definition}\label{orbibundle}
   Given a uniformized topological space $X$ and a topological space $E$ with a surjective continuous map $pr:E\longrightarrow X$, an orbifold structure of rank $k$ for E over U consists the following data:
   \begin{enumerate}
   \item An orbifold atlas $(V,G,\pi)$ of $X$.
   \item A uniformizing system $(V\times\mathbb{C}^k,G,\widetilde{\pi})$ for $E$. The action of $G$ on $V\times\mathbb{C}^k$ is an extension of the action of $G$ on $V$ given by $g(x,v)=(gx,\rho(x,g)v)$, where $\rho:V\times G\longrightarrow Aut(\mathbb{C}^k)$ is a smooth map satisfying :
   $$\rho(gx,h)\circ \rho(x,g)=\rho(x,h\circ g),\quad g,h\in G,x\in V$$
   \item The natural projection map $\widetilde{pr}:V\times\mathbb{C}^k\longrightarrow V$ satisfies $\pi \circ \widetilde{pr}=pr\circ \widetilde{\pi}$.
   \end{enumerate}
\end{definition}

 In particular, for a complex orbibundle $\mathcal{E}$ of rank 1 over the groupoid $\mathcal{G}$, we have

\begin{lemma}\label{lemma}
  Suppose $(\mathbb{C}^n\times\mathbb{C},G,\widetilde{\pi})$ is a uniformizing system for $\mathcal{E}$, then:

  (1)The map $\rho:\mathbb{C}^n\times G\longrightarrow Aut(\mathbb{C})(=\mathbb{C}^*)$ can remove the first variable, i.e.
$$
       \rho(x,g)=\rho(0,g),\quad \forall x\in \mathbb{C}^n, \forall g\in G.
$$

   (2)The action matrix of G over $\mathbb{C}^n\times\mathbb{C}$ is of the form:
$$
    \left(\begin{array}{cc}
             * & 0 \\
             0 & a(g)
    \end{array}\right).
$$
\end{lemma}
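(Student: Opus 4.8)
The plan is to first normalize the orbifold chart so that $G$ acts linearly fixing the origin, and then to use an averaging argument over $G$ to replace $\rho$ by a fiberwise constant. First I would note that a chart around a point with local group $G$ may be taken centered, so that $0\in\C^n$ is a $G$-fixed point, and then invoke Bochner's linearization theorem together with averaging a Hermitian metric to assume, after an equivariant change of coordinates, that $G$ acts on $\C^n$ through a unitary representation $L\colon G\to U(n)$. Setting $a(g):=\rho(0,g)$ and evaluating the cocycle identity $\rho(gx,h)\rho(x,g)=\rho(x,h g)$ at the fixed point $x=0$ shows immediately that $a\colon G\to\C^{*}$ is a homomorphism. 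These observations already fix the expected answer: $L(g)$ will occupy the ``$*$'' block and $a(g)$ the lower-right entry in (2), so that everything comes down to showing that $\rho(x,g)$ can be taken to equal $a(g)$.

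For the latter I would argue as follows. Each $\rho(\cdot,g)\colon\C^n\to\C^{*}$ is smooth and $\C^n$ is simply connected, so it has a smooth logarithm $\beta_g$; exponentiating the cocycle identity, the function $x\mapsto\beta_h(gx)+\beta_g(x)-\beta_{hg}(x)$ is $2\pi i\Z$-valued, hence constant on the connected space $\C^n$, say equal to $2\pi i\,m(g,h)$. I would then form the averaged gauge $u(x):=\exp\!\big({-}\tfrac1{|G|}\sum_{h\in G}\beta_h(x)\big)\in C^{\infty}(\C^n,\C^{*})$; a short computation using this identity and the bijection $h\mapsto hg$ of $G$ shows that $u(gx)^{-1}\rho(x,g)\,u(x)$ does not depend on $x$, and evaluating at $x=0$ identifies the value with $\rho(0,g)=a(g)$. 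Passing to the gauge-equivalent uniformizing system obtained by the fiberwise change $(x,v)\mapsto(x,u(x)^{-1}v)$ — still a uniformizing system for $\cE$ — the new datum is $\rho'(x,g)=u(gx)^{-1}\rho(x,g)u(x)=a(g)=\rho'(0,g)$, which is part (1); and the $G$-action on $\C^n\times\C$ in this system is $(x,v)\mapsto(L(g)x,\,a(g)v)$, whose matrix has the block form asserted in part (2).

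The main obstacle — and essentially the only content of the lemma — is this gauge construction: the stated pointwise equality $\rho(x,g)=\rho(0,g)$ can hold only after a good choice of trivialization (for instance $\rho(x,g)=e^{x}$ on $[\C/(\Z/2)]$ satisfies the cocycle identity but is not fiberwise constant, though it \emph{is} gauge-equivalent to the constant one), so what must really be proved is that such a trivialization exists. Conceptually this is the $G$-equivariant homotopy invariance of equivariant line bundles applied to the $G$-equivariant retraction of $\C^n$ onto $0$; the averaging above makes it explicit. The only delicate points are the existence of a \emph{global} smooth logarithm on $\C^n$ (supplied by simple connectedness) and the control of the integral ambiguity $m(g,h)$ (killed by connectedness); equivalently, the obstruction to trivializing $\rho$ lives in $H^2(G;\Z)$ and vanishes because $\rho$ is already trivial over the fixed point, while the residual indeterminacy is absorbed since $H^1$ of $G$ with coefficients in the $|G|$-divisible space $C^{\infty}(\C^n,\C)$ vanishes.
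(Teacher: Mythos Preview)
Your argument is correct and considerably more careful than the paper's own treatment, which disposes of the lemma in two lines: fixing $g\in G$, the paper asserts that ``the eigenvalue of $\rho(x,g)$ is the same when $x$ changes (since the eigenvalue is `discrete' data, but the change is continuous)'', and then notes that a $1\times1$ matrix equals its eigenvalue. As you rightly observe with the example $\rho(x,g)=e^{x}$ on $[\C/(\Z/2)]$, this heuristic does not survive scrutiny: a continuous family of nonzero scalars is under no obligation to be constant, and nothing in Definition~\ref{orbibundle} forces it to be. The paper's reasoning would go through under an additional unstated hypothesis---for instance that the $G$-action on the total space $\C^n\times\C$ is already linear, which is how the lemma is actually used later (in Proposition~\ref{relation} one only needs the form of the action at the origin of the chart)---whereas your averaging/gauge construction genuinely \emph{manufactures} such a trivialization from an arbitrary smooth cocycle. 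So the two routes differ both in method and in what they establish: the paper's discreteness remark is really the observation that $\rho(0,g)$ is a root of unity at a $G$-fixed point (which suffices for the application), while your argument proves the stronger and correctly-interpreted statement that the cocycle is gauge-equivalent to the constant one on a contractible $G$-chart. Your identification of the lemma as holding only after a suitable change of trivialization is exactly right.
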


  The proof of the lemma is straightforward. Fix $g\in G$, observe that the eigenvalue of $\rho(x,g)$ is the same when $x$ change
  (since the eigenvalue is ``discrete" data, but the change is continuous.)
  Because $rank\mathcal{E}=1$, then the only element of the matrix $\rho(x,g)$ is just its  eigenvalue,
  so we get the first part, then the second part is trivial.

  \begin{remark} Using the language of groupoids, \cite{ALR} generalizes the definition of orbifold structure to noneffective group action. In this paper, we will follow their notation, use $\cG$ and $|\cG|$ to denote a general orbifold and its underlying topological space, $\cE$ to denote orbibundle. So we have $|\cG|=X$, $|\cE|=E$.
  \end{remark}

\begin{example}\label{ex}

  Define the action of the multiplicative group $\mathbb{C}^*$ on $\mathbb{C}^{n+1}-\{0\}$ by
$$
        \lambda\cdot (z_0,\ldots,z_n)=(\lambda^{m_0}z_0,\ldots,\lambda^{m_n}z_n),
$$
where the $m_i$ are integers greater than or equal to one. The quotient
  $$W\mathbb{P}(m_0,\ldots,m_n)=\{\mathbb{C}^{n+1}-\{0\}\}/\mathbb{C}^*$$
  is called a weighted projective space. We will give one of its orbifold atlas in section \ref{wps}.

\end{example}

Next we give the definition of orbifold morphisms or orbifold maps:

\begin{definition}
An orbifold morphism $f:\cG\longrightarrow\cH$ is a given orbifold atlas $\{(\tilde U_i, G_i, \phi_i, U_i)\}$ of $\cG$ and $\{(\tilde V_i, H_i, \psi_i, V_i)\}$ of $\cH$, and  an assignment of smooth maps $\{\tilde f_i:\tilde U_i\rightarrow\tilde V_i\}$ such that for any $g\in G_i$, there is $h\in H_i$ so that $h\cdot \tilde f_i(x)=\tilde f_i(g\cdot x)$ for any $x\in \tilde U_i$. And if $U_i\cap U_j\neq\varnothing$, $f_i$ and $f_j$ are compatible with respect to the orbifold structure of $U_i\cap U_j$. (cf. \cite{CR1} Def 2.1). An orbifold morphism $f$ induces a continuous map $|f|:|\cG|\longrightarrow|\cH|$.

\end{definition}

\begin{remark}\label{orc}
  There is also a equivalence relation between orbifold morphisms, which is called $\mathcal{R}$-equivalence.
  If two morphism are $\mathcal{R}$-equivalence, then we can treat them the same (See p.48 \cite{ALR} for more details).

\end{remark}

\subsection{Orbifold  cohomology} \

  Let $\cG$ be an  orbifold .
 If $|\cG|$ is compact,   the de Rham cohomology     of an   orbifold $\cG$,  denoted by
   $H^*_{orb}(\cG)$,
 is defined to be the cohomology of the $\cG$-invariant de Rham complex
   $(\Omega^p(\cG), d)$. Roughly speaking, given an orbifold atlas $\{(\tilde U_i, G_i, \phi_i, U_i)\}$ of $\cG$. An element in $\Omega^p(\cG)$ is an assignment $\{\omega_i \in \Omega^p(\tilde U_i)\}$ such that $g^*(\omega_i)=\omega_i$ for $\forall g \in G_i$, and $\phi_{i*}( \omega_i)|_{U_i\cap U_j}=\phi_{j*}( \omega_j)|_{U_i\cap U_j}$. The differential operator $d$ is induced by the differential operators of $\Omega^p(\tilde U_i)$. (See \cite{ALR} for details).

 The Satake's de Rham theorem \cite{Sat} for  an orbifold  $\cG$ leads to an isomorphism
$$
          H^*_{orb}(\cG) \cong H^*(|\cG|, \R)
$$
between the  de Rham cohomology and the singular cohomology of the underlying topological space.
Since in this paper we just consider the case $R=\R$, we treat them the same, and denote the cohomology group of $\cG$ by $H^*(\cG)$.

\begin{remark}
  In the case that $\cG$ is effective and $\cE$ is a complex line bundle, we can construct, via Chern-Weil theory, a Chern class $cw_1(\cE)$ in the de Rham cohomology group $H^2(\cG)$. (\cite{CR1})
\end{remark}

\subsection{Weighted projective spaces }\label{wps}

In this subsection, we recall some basic properties of weighted projective spaces and orbibundles over them.
For more detail, see \cite{G}.

The definition of weighted projective spaces $W\mathbb{P}(\bf m)$ is described in Example \ref{ex}(2). Now we give a natural orbifold atlas on it.
In fact, as is usually done for projective sapces, we can consider the sets
$$
         V_i=\{[{\bf z}]_{{\bf m}}\in W\mathbb{P}({\bf m})|z_i\neq 0\}\subset W\mathbb{P}(\bf m)
$$
and the bijective maps $\phi_i$ from $V_i$ to $\C^n/\mu_{m_i}({\bf \widehat{m}}^i)$ given by
$$
        \phi_i([{\bf z}]_{{\bf m}})=(\frac{z_0}{z_i^{\frac{m_0}{m_i}}},\ldots,\frac{\widehat{z_i}}{z_i},\ldots,\frac{z_n}{z_i^{\frac{m_n}{m_i}}})_{m_i},
$$
where $z_i^{\frac{1}{m_i}}$ is a $m_i$-root of $z_i$, $\mu_{m_i}$ is the set of $m_i$-roots of 1 and $(.)_{m_i}$ is a $\mu_{m_i}$-conjugacy class
in $\C^n/\mu_{m_i}({\bf \widehat{m}}^i)$ with $\mu_{m_i}$ acting on $\C^n$ by
$$
        \xi\cdot{\bf z}=(\xi^{q_0}z_0,\ldots,\xi^{q_n}z_n),\quad \xi\in\mu_{m_i}.
$$
Then on $\phi_i(V_j\cap V_i)\subset\C^n/\mu_{m_i}({\bf \widehat{m}}^i)$,
$$
       \phi_j\circ \phi_i^{-1}((z_0,\ldots,z_n)_{m_i})
       =(\frac{z_0}{z_j^{\frac{m_0}{m_j}}},\ldots,\frac{\widehat{z_j}}{z_j},\ldots,\frac{1}{z_j^{\frac{m_i}{m_j}}},\ldots,\frac{z_n}{z_j^{\frac{m_n}{m_j}}})_{m_j},
$$
so $W\mathbb{P}({\bf m})$ has the structure of an orbifold where all singularities have cyclic structure groups.
We can easily see, using symplectic reduction, that weighted projective spaces are symplectic orbifolds (Proposition 2.8 \cite{G}).

Consider the natural projection map
\begin{align*}
   \pi:W\mathbb{P}({\bf m}) & \longrightarrow  \mathbb{C}P^n \\
     [(z_0,\cdots,z_n)]  & \longmapsto   [z_0^{m_0}: \cdots :z_n^{m_n}].
\end{align*}
 $\pi$ is continuous and for all $ p\in \C P^n$, $\pi^{-1}(p)$ is a discrete point set. So we have
\begin{equation}\label{coh}
         H^*(W\mathbb{P}({\bf m}))=H^*(|W\mathbb{P}({\bf m})|,\R)=H^*(\C P^n,\R)=\R^{n+1}.
\end{equation}

In the case of usual projective spaces, any line bundles over $\mathbb{C}P^n$ is isomorphic to some line bundle $\mathcal{O}(k)$ of Chern class $k\in \Z$ of the form $\mathbb{S}^{2n+1}\times_{\mathbb{S}^1}\C$ with $\mathbb{S}^1$ acting on $\mathbb{S}^{2n+1}\times\C$ by
$$
       \lambda\cdot({\bf z},w)=(\lambda z_1,\ldots,\lambda z_n,\lambda^kw).
$$
Similarly, for any weighted projective space $W\mathbb{P}({\bf m})$, we can define the orbibundle $\mathcal{O}_{\bf m}(p/r)$ for any rational $p/r$ $(r>0)$, as the orbibundle $\mathbb{S}^{2n+1}\times_{\mathbb{S}^1}\C\rightarrow W\mathbb{P}(r{\bf m})$, with $\mathbb{S}^1$ action on $\mathbb{S}^{2n+1}\times\C$ given by
$$
        \lambda\cdot({\bf z},w)=(\lambda^{rm_1} z_1,\ldots,\lambda^{rm_n} z_n,\lambda^pw).
$$
There is a basic property for these line bundles (Remark 2.13 \cite{G}):

\begin{proposition}\label{norbund}
  The orbibundle $\mathcal{O}_{\bf m}(p/r)$ is isomorphic to the normal orbibundle of $W\mathbb{P}(r{\bf m})$ inside $W\mathbb{P}(r{\bf m},p)$
\end{proposition}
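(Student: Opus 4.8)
The plan is to exhibit both $\mathcal{O}_{\mathbf{m}}(p/r)$ and the normal orbibundle in question as quotients by one and the same $\mathbb{S}^1$-action, and then to use the fact that, for a global quotient orbifold, forming the normal orbibundle of an invariant suborbifold commutes with passing to the quotient. First, following Proposition 2.8 of \cite{G}, I would write $W\mathbb{P}(r\mathbf{m})=[(V\setminus\{0\})/\mathbb{C}^*]$, where $V$ is the complex vector space carrying the $\mathbb{C}^*$-action with weights $r\mathbf{m}$, and likewise $W\mathbb{P}(r\mathbf{m},p)=[((V\oplus\mathbb{C}_w)\setminus\{0\})/\mathbb{C}^*]$, where $w$ is an extra coordinate of weight $p$. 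The locus $\{w=0\}$ is $\mathbb{C}^*$-invariant and its quotient is canonically $W\mathbb{P}(r\mathbf{m})$, so the inclusion $(V\setminus\{0\})\times\{0\}\hookrightarrow(V\oplus\mathbb{C}_w)\setminus\{0\}$ descends to an embedding of orbifolds $W\mathbb{P}(r\mathbf{m})\hookrightarrow W\mathbb{P}(r\mathbf{m},p)$ onto a suborbifold of complex codimension one; note that the $\mathbb{C}^*$-stabilizer of a point $(v,0)$ equals that of $v$, since the vanishing of $w$ imposes no extra condition.

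Next I would compute the normal orbibundle. The $\mathbb{C}^*$-equivariant normal bundle of $(V\setminus\{0\})\times\{0\}$ inside $(V\oplus\mathbb{C}_w)\setminus\{0\}$ is canonically the trivial line bundle $(V\setminus\{0\})\times\mathbb{C}_w$ spanned by $\partial/\partial w$, on which $\mathbb{C}^*$ acts by the weight-$r\mathbf{m}$ action on the base and by weight $p$ on the fibre. By the equivariant tubular neighbourhood theorem, for a $G$-invariant submanifold $Y\subset X$ with $G$ acting with finite stabilizers there is a natural isomorphism $N_{[Y/G]\subset[X/G]}\cong[N_{Y\subset X}/G]$; applying this with $G=\mathbb{C}^*$ (equivalently, after restricting to unit spheres, $G=\mathbb{S}^1$) shows that the normal orbibundle of $W\mathbb{P}(r\mathbf{m})$ in $W\mathbb{P}(r\mathbf{m},p)$ is $\mathbb{S}(V)\times_{\mathbb{S}^1}\mathbb{C}$ with $\mathbb{S}^1$ acting by weights $r\mathbf{m}$ on $\mathbb{S}(V)$ and by weight $p$ on $\mathbb{C}$, which is verbatim the definition of $\mathcal{O}_{\mathbf{m}}(p/r)$ recalled above.

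To phrase this identification in the orbifold-atlas language of Definition \ref{orbibundle}, I would work with the atlas $\{(V_i,\mu_{rm_i})\}$ of $W\mathbb{P}(r\mathbf{m})$ built exactly as in \S\ref{wps}: over $\tilde V_i\cong\mathbb{C}^n$ one trivializes the normal bundle by the frame coming from $w$ after the change of variable $w\mapsto w/z_i^{\,p/(rm_i)}$, then computes the resulting $\mu_{rm_i}$-action on the fibre coordinate together with the transition functions on $\tilde V_i\cap\tilde V_j$, and checks that the resulting cocycle $\rho$ satisfies $\rho(gx,h)\circ\rho(x,g)=\rho(x,h\circ g)$ and coincides with the one defining $\mathcal{O}_{\mathbf{m}}(p/r)$. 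Here Lemma \ref{lemma} lightens the work, since it guarantees that $\rho$ is independent of the base point, so only the scalar fibre-action $a(g)$ and the transition functions must be matched.

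The part needing the most care is this last step: one must keep track of the fractional exponents $p/(rm_i)$ and of the $\mu_{rm_i}$-weights through the coordinate changes of $W\mathbb{P}(r\mathbf{m})$, and confirm that the (possibly ineffective) fibrewise action on the normal directions at the orbifold points is exactly that of $\mathcal{O}_{\mathbf{m}}(p/r)$. Phrasing everything in the groupoid language rather than with effective atlases (cf. the Remark following Lemma \ref{lemma}) is what makes this legitimate even when common divisors among $r$, $p$ and the $m_i$ prevent $\mathbb{S}^1$ from acting effectively on the relevant sphere.
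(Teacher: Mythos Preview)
The paper does not supply its own proof of this proposition: it is stated with a citation to Remark~2.13 of \cite{G} and used thereafter as a black box. So there is no ``paper's proof'' to compare against beyond the reference.

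Your argument is correct and is in fact the natural one: realize $W\mathbb{P}(r\mathbf{m})\hookrightarrow W\mathbb{P}(r\mathbf{m},p)$ as the $\mathbb{C}^*$-invariant locus $\{w=0\}$, observe that the equivariant normal bundle upstairs is the trivial line bundle $(V\setminus\{0\})\times\mathbb{C}_w$ with $\mathbb{C}^*$ acting by weight $p$ on the fibre, and descend. The resulting associated bundle $\mathbb{S}(V)\times_{\mathbb{S}^1}\mathbb{C}$ is literally the definition of $\mathcal{O}_{\mathbf{m}}(p/r)$ given in the paper, so the identification is immediate. The atlas-level check you outline in the third paragraph is unnecessary once you have the global quotient description, though it does no harm; your invocation of Lemma~\ref{lemma} is appropriate but already implicit in the fact that the fibre action comes from a character of $\mathbb{C}^*$. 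This is almost certainly the argument Godinho has in mind as well, since \cite{G} works throughout with these same $\mathbb{S}^1$-quotient models.
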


The orbibundle $\mathcal{O}_{\bf m}(p/r)$ can be considered as elements of the rational Picard group of $W\mathbb{P}({\bf m})$ and we have a natural identification
$$
        Pic(W\mathbb{P}({\bf m}))_{\Q}\cong H^2(W\mathbb{P}({\bf m}),\Q)\cong \Q
$$
by assigning to a line orbibundle its first Chern class. Moreover, we have the following result (Proposition 2.15 \cite{G}):

\begin{proposition}
  Every line bundle over $W\mathbb{P}({\bf m})$ is isomorphic to some $\mathcal{O}_{\bf m}(\chi)(\chi\in\Q)$ and its Chern class is given by
$$
      c_1(\mathcal{O}_{\bf m}(\chi))=\frac{\chi}{lcm(m_0,\ldots,m_n)}.
$$
\end{proposition}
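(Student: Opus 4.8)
The plan is to reduce the statement to a single Chern class computation, using the identification recorded just above, namely that the first Chern class gives an isomorphism $\mathrm{Pic}(W\mathbb{P}(\mathbf{m}))_{\Q}\cong H^2(W\mathbb{P}(\mathbf{m}),\Q)\cong\Q$. Granting this, a line orbibundle over $W\mathbb{P}(\mathbf{m})$ is determined up to isomorphism by one rational number, so the proposition amounts to: (a) the family $\chi\mapsto\mathcal{O}_{\mathbf{m}}(\chi)$ exhausts the (rational) Picard group, and (b) $c_1(\mathcal{O}_{\mathbf{m}}(\chi))=\chi/\operatorname{lcm}(m_0,\ldots,m_n)$. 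By the injectivity of $c_1$, (a) follows from (b) once one checks the values $\chi/\operatorname{lcm}(m_0,\ldots,m_n)$ sweep out all of $\Q$, which is clear. The injectivity itself (and, for the integral version of the statement, torsion-freeness of $\mathrm{Pic}$) I would get from the orbifold exponential sheaf sequence $0\to\Z\to\mathcal{O}\to\mathcal{O}^*\to 0$ on $W\mathbb{P}(\mathbf{m})$: since by \eqref{coh} this orbifold has the rational cohomology of $\C P^n$, in particular $H^1(W\mathbb{P}(\mathbf{m}),\R)=0$, and since $H^2(W\mathbb{P}(\mathbf{m}),\mathcal{O})=H^{0,2}=0$ by orbifold Hodge theory, the connecting homomorphism realizes $\mathrm{Pic}(W\mathbb{P}(\mathbf{m}))$ inside the torsion-free group $H^2(W\mathbb{P}(\mathbf{m}),\Z)\subset H^2(W\mathbb{P}(\mathbf{m}),\Q)\cong\Q$.

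The computational core is (b). I would compute $c_1(\mathcal{O}_{\mathbf{m}}(p/r))$ by Chern--Weil on the Seifert presentation: by definition $\mathcal{O}_{\mathbf{m}}(p/r)=\mathbb{S}^{2n+1}\times_{\mathbb{S}^1}\C\to W\mathbb{P}(r\mathbf{m})$ with the weighted $\mathbb{S}^1$-action, so choosing the standard $\mathbb{S}^1$-invariant connection $1$-form on $\mathbb{S}^{2n+1}$ adapted to the weights $(rm_0,\ldots,rm_n)$ and taking its curvature produces an explicit representative of the orbifold first Chern class; evaluating this $2$-form against the generator of $H_2(W\mathbb{P}(r\mathbf{m}),\Q)$ represented by a suitable orbifold line $W\mathbb{P}(rm_i,rm_j)$ gives a rational multiple of $p$, and tracking the scaling factor $r$ together with the weights $m_i$ through the identification $|W\mathbb{P}(r\mathbf{m})|=|W\mathbb{P}(\mathbf{m})|$ (which share their rational cohomology) yields the asserted value $(p/r)/\operatorname{lcm}(m_0,\ldots,m_n)$. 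An alternative route is to invoke Proposition~\ref{norbund}, which presents $\mathcal{O}_{\mathbf{m}}(p/r)$ as the normal orbibundle of $W\mathbb{P}(r\mathbf{m})$ inside $W\mathbb{P}(r\mathbf{m},p)$, and to extract its degree from a self-intersection/adjunction computation in the weighted projective space $W\mathbb{P}(r\mathbf{m},p)$, whose rational cohomology ring is known; both routes must land on the same denominator.

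With (b) in hand the proposition follows immediately: given any line bundle $\mathcal{L}$ over $W\mathbb{P}(\mathbf{m})$, put $c=c_1(\mathcal{L})\in\Q$ and $\chi=c\cdot\operatorname{lcm}(m_0,\ldots,m_n)\in\Q$; then $c_1(\mathcal{O}_{\mathbf{m}}(\chi))=c=c_1(\mathcal{L})$, and by the injectivity of $c_1$ on the Picard group one concludes $\mathcal{L}\cong\mathcal{O}_{\mathbf{m}}(\chi)$, which simultaneously yields the Chern class formula. I expect the main obstacle to be the normalization bookkeeping in step (b): producing exactly the factor $1/\operatorname{lcm}(m_0,\ldots,m_n)$ forces one to pin down precisely which integral $2$-cycle generates $H_2(W\mathbb{P}(\mathbf{m}),\Z)$ and to propagate the weighted $\mathbb{S}^1$-action (and the auxiliary rescaling by $r$) carefully through the associated-bundle construction; by comparison, the injectivity/torsion-freeness input is a milder point, and the surjectivity of $\chi\mapsto c_1(\mathcal{O}_{\mathbf{m}}(\chi))$ is automatic once the formula is established.
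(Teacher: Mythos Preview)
The paper does not actually supply a proof of this proposition: it is quoted verbatim as Proposition~2.15 of \cite{G} (Godinho's paper on blowing up symplectic orbifolds), so there is no in-paper argument to compare against. Your sketch is therefore not redundant but rather fills in what the present paper leaves to a citation.

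Your overall strategy is sound and is essentially the standard one: use the identification $\mathrm{Pic}(W\mathbb{P}(\mathbf{m}))_{\Q}\cong H^2(W\mathbb{P}(\mathbf{m}),\Q)\cong\Q$ already recorded in the paper to reduce everything to the single Chern--Weil computation $c_1(\mathcal{O}_{\mathbf{m}}(\chi))=\chi/\operatorname{lcm}(m_0,\ldots,m_n)$, and then appeal to injectivity of $c_1$. Two small cautions. First, your exponential-sequence argument for injectivity is fine rationally, but the assertion that $H^2(W\mathbb{P}(\mathbf{m}),\Z)$ is torsion-free is delicate for orbifolds (the underlying space is a rational homology $\C P^n$, not an integral one in general); since the proposition is really a statement about the rational Picard group this does not matter, but you should phrase the injectivity over $\Q$ rather than over $\Z$. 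Second, as you yourself flag, the entire content lives in the normalization step producing the denominator $\operatorname{lcm}(m_0,\ldots,m_n)$; your Chern--Weil outline is correct in spirit but is not yet a computation, and the alternative route via Proposition~\ref{norbund} and self-intersection in $W\mathbb{P}(r\mathbf{m},p)$ is probably the cleaner way to nail down the constant. Either route, carried out carefully, recovers Godinho's result.
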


\subsection{Weighted blow-up}\
In this subsection, we recall the construction of weighted blow-up. In this paper,  we will only discuss weighted blow-up at a smooth point.
See \cite{G} for more general case and  more details.

Suppose that $(\cG,\omega)$ is a symplectic orbifold, $H:\cG\rightarrow\R$ is a periodic hamiltonian function.
The hamiltonian vector field $X_H$ generates a circle action, which is compatible with the orbifold structure of $\cG$. Then $\cH=H^{-1}(0)$ is a suborbifold preserved by circle action.
Then we can obtain a symplectic orbifold $\cH/\mathbb{S}^1$ via symplectic reduction. More precisely, let
$$
       \pi : \cH \rightarrow \mathcal{Z}=\cH /\mathbb{S}^1.
$$
$\mathcal{Z}$ admits a natural symplectic structure $\tau_0$ such that
$$
            \pi^*\tau_0=i_0^*\omega,
$$
where $i^*_0:\cH=H^{-1}(0)\rightarrow \cG$ is the inclusion. Set
$$
           \cG^-=H^{-1}((0,+\infty)),\quad \cG^+=H^{-1}((-\infty,0)).
$$
Then after gluing $\mathcal{Z}$ with $\cG^+$ and $\cG^-$ respectively, we get
$$
             \overline{\cG}^+=\cG^+\bigsqcup\mathcal{Z},\quad \overline{\cG}^-=\cG^-\bigsqcup\mathcal{Z}.
$$
\cite{L} shows that there is a natural symplectic form on $\overline{\cG}^\pm$ such that it is $\omega$ when restricted on $\cG^\pm$ and is $\tau_0$ on $\mathcal{Z}$.
The surgery obtaining $\overline{\cG}^\pm$ from $\cG$ is called symplectic cutting. We also call $\overline{\cG}^\pm$ the symplectic cuts of $\cG$.

Recall that blow-up is just a special case of symplectic cutting.
Suppose that $p$ is a smooth point of $\cG$, then there is a Darboux uniformizing chart $(z_1, \ldots, z_n)$near $p$. Set the hamiltonian function as:
$$
         H({\bf z})=\Sigma^n_{i=0}|z_i|^2-k, \quad k>0.
$$
The induced $\mathbb{S}^1$-action is given by:
$$
        \lambda\cdot{\bf z}=(\lambda z_0,\ldots,\lambda z_n),\quad \lambda\in\mathbb{S}^1.
$$
Then we perform symplectic cutting on $\cG$ and get $\overline{\cG}^\pm$.
We observe that $\overline{\cG}^+\cong \C P^n$ and call $\widetilde{\mathcal{G}}:=\overline{\cG}^-$ the symplectic blow-up of $\cG$.
Roughly speaking, blow-up is obtained by cutting a disk near $p$ and collapsing the boundary via $\mathbb{S}^1$-action.

Similarly, if the hamiltonian function is:
$$
        H({\bf z})=\Sigma^n_{i=0}m_i\cdot|z_i|^2-k, \quad m_i\in \Z^+, \quad k>0.
$$
 Collapse the boundary $H^{-1}(0)$ with the induced $\mathbb{S}^1$-action:
$$
         \lambda\cdot{\bf z}=(\lambda^{m_0} z_0,\ldots,\lambda^{m_n} z_n),\quad \lambda\in\mathbb{S}^1,\quad m_i\in\Z^+.
$$
 After performing symplectic cutting, we get the weighted blow-up $\widetilde{\mathcal{G}}:=\overline{\cG}^-$, and ${\bf m}=(m_0,\ldots,m_n)$ is called its weight.

 \begin{remark}\begin{enumerate}
   \item We observe that $\overline{\cG}^+$ is $W\mathbb{P}(m_0,\cdots,m_n,1)$. This can be obtained by another description of symplectic cut given by Lerman \cite{L}.
   Consider the symplectic orbifold $(\cG\times\C,\omega\oplus\sqrt{-1}dw\wedge d\overline{w})$, set the Hamiltonian function as:
$$
         \mathbf{H}({\bf z},w)=H({\bf z})+|w|^2=\Sigma^n_{i=1}m_i\cdot|z_i|^2+|w|^2-k.
$$
   The induced circle action is $\lambda\cdot({\bf z,w})=(\lambda^{m_0} z_0,\cdots,\lambda^{m_n} z_n,\lambda w)$.
   Lerman \cite{L} shows that:
$$
                  \overline{\cG}^+=\{({\bf z},w)\in\cG\times\C|\mathbf{H}({\bf z},w)=0\}/\mathbb{S}^1.
$$
   Then we easily get $\overline{\cG}^+=W\mathbb{P}(m_1,\ldots,m_n,1)$ after a scaling.
   \item Since the exceptional divisor $\mathcal{Z}\cong W\mathbb{P}(m_0,\ldots,m_n)$, from Proposition 2.8 and (1)
   we know that the normal bundle of $\mathcal{Z}$ in $\overline{\cG}^\pm$ is $\mathcal{O}_{{\bf m}}(\pm1)$ respectively.
   \item Because the normal bundle of $\mathcal{Z}$ in $\widetilde{\cG}$ is $\mathcal{O}_{{\bf m}}(-1)$, from the view of topology, blow-up is
   removing $p$ and gluing $\mathcal{O}_{{\bf m}}(-1)$ on $\cG\backslash\{p\}$ via the projection:
\begin{align*}
     p:\mathcal{O}_{{\bf m}}(-1)=\mathbb{S}^{2n+1}\times_{\mathbb{S}^1}\C & \longrightarrow  \C^{n+1}   \\
          [({\bf z},w)] &\longmapsto (w^{m_0}z_0,\ldots,w^{m_n}z_n ).
\end{align*}
   Note that $p^{-1}(0)=W\mathbb{P}({\bf m})$, and $p^{-1}(\C^{n+1}-\{0\})\cong \C^{n+1}-\{0\}$, which is similar to the ordinary blow-up.
   The map $p$ also induces a natural projection map $p:|\widetilde{\cG}|\rightarrow|\cG|$.
   \end{enumerate}
 \end{remark}

\subsection {Orbifold Gromov-Witten theory and degeneration formula}\

In this subsection, we briefly recall the definition of (absolute) Gromov-Witten theory of an almost complex, compact orbifold $\cG$. Then we recall the definition of relative Gromov-Witten theory of a symplectic pair $(\cG,\mathcal{Z})$.
Finally we introduce the degeneration formula which expresses absolute invariants in terms of relative Gromov-Witten invariants. One can see the
original paper \cite{CR1, CR2, CLSZ} for details. But we need to introduce an important notion before:

Let $\cG  = (X, \cU) $ be an orbifold, then the set of pairs
$$
         \tilde X =\{ (x, (g)_{G_x})|  x\in X, g\in G_x\},
$$
where $(g)_{G_x}$ is the conjugacy class of $g$ in the local group $G_x$, has a natural
orbifold structure given by
$$
         \{ \bigl(\tilde U^{g}, Z_G(g),  \tilde\pi,  \tilde U^{g}/C(g) \big) |  g\in G\}.
$$
Here for each orbifold chart $( (\tilde U,  G,  \pi, U) \in \cU$,  $Z_G(g)$ is the centralizer of $g$ in $G$ and
$\tilde U^{g}$ is the fixed-point set of $g$ in $\tilde U$.  This orbifold, denoted by $\bigwedge\cG$, is called the inertia orbifold of $\cG$.
The inertia orbifold $\bigwedge \cG$  consists of disjoint union of sub-orbifolds of $\cG$. To describe the connected components of $\bigwedge \cG$,
we need to introduce an equivalence relation on the set of conjugacy classes in local groups as in \cite{CR1}.  For each $x\in X$, let
$(\tilde U_x,  G_x,  \pi_x, U_x)$ be a local orbifold chart at $x$. If $y\in U_x$, up to conjugation, there is an injective homomorphism of local groups $G_y\to G_x$, hence
the conjugacy class $(g)_{G_x}$ is well-defined for $g\in G_y$. We define the equivalence to be generated
by the relation $(g)_{G_y}\sim (g)_{G_x}$. Let $\cT_1$ be the set of equivalence classes, then
$$
        \bigwedge \cG = \bigsqcup_{(g) \in \cT_1} \cG_{(g)},
$$
where $\cG_{(g)} =\{(x, (g')_{G_x})| g'\in G_x, (g')_{G_x} \sim  (g)\}$.
 Note that
$\cG_{(1)} =\cG$ is called the non-twisted sector and  $\cG_{(g)}$ for $g\neq 1$ is called a twisted sector
of $\cG$.  Similarly, the  $k$-sectors $\bigwedge\cG^{[k]}$ of $\cG$ is defined to be the orbifold  on the
set of all pairs
$$
  (x, (g_1, \cdots, g_k)_{G_x}),
$$
where $(g_1, \cdots, g_k)_{G_x}$ denotes the conjugacy  class of k-tuples.  Here two
k-tuples $(g_1^{(i)}, \cdots, g_k^{(i)})_{G_x}$, $i=1, 2$, are conjugate if there is $g\in G_x$ such that
$g_j^{(2)} = g g_j^{(1)}g^{-1}$ for all $j=1, \cdots, k$.  The  $k$-sector orbifold $\bigwedge\cG^{[k]}$
consists of disjoint union of sub-orbifolds of $\cG$
$$
        \bigwedge \cG^{[k]} = \bigsqcup_{({\bf g}) \in \cT_k} \cG_{({\bf g})},
$$
where $\cT_k$ denotes the set of  equivalence classes of  conjugacy   k-tuples in local groups.

  The   degree shifting number $\iota: \bigsqcup_{(g) \in \cT_1} \cG_{(g)}   \to \Q$,   defined by Chen and Ruan in \cite{CR1},
   is   determined by the canonical  automorphism  $\Phi$ on the normal bundle   of $e=\sqcup e_{(g)}:  \sqcup \cG_{(g)}  \to \cG$
$$
          \cN_e  =\bigsqcup_{(g) \in \cT_1} \cN_{(g)}  = \bigsqcup_{(g) \in \cT_1}  e^*_{(g)} T\cG/T\cG_{(g)},
$$
  where the  automorphism  $\Phi$ acting on the normal bundle $\cN_{(g)} $ over $\cG_{(g)}$
   is given by the canonical $g$-action on the complex vector bundle  over the orbifold chart $\tilde U^g$ of
  $\cG$.
 Over each   connected component  $\cG_{(g)}$,  the normal bundle  $\cN_{(g)} $ has an  eigen-bundle decomposition
$$
    \cN_{(g)}  = \bigoplus_{\theta_{(g)} \in \Q\cap (0, 1) } \cN  (\theta_{(g)})
$$
  where $\Phi$ on $\cN   (\theta_{(g)})  $ is  the multiplication by $e^{2\pi \sqrt{-1}\theta_{(g)}}$.
   Then  the degree shifting number
$$
   \iota_{(g)}  = \sum_{\theta_{(g)}}  rank_\C (\cN (\theta_{(g)}) ) \theta_{(g)} ,
$$
  defines   a locally constant function on $\bigwedge\cG$.

\begin{lemma}(\cite{ALR})\label{dim}
    There exists a natural orbifold isomorphism $i:\cG_{(g)}\rightarrow \cG_{(g^{-1})}$. Moreover
$$
        dim \cG_{(g)}=dim \cG_{(g^{-1})}=dim \cG - \iota_{(g)}- \iota_{(g^{-1})}.
$$
\end{lemma}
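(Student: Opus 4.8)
The plan is to handle the isomorphism and the dimension formula separately; the first is essentially a tautology, the second a short bookkeeping computation with the eigenbundle decomposition.

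\emph{The isomorphism.} For an orbifold chart $(\tilde U, G, \pi, U)$ of $\cG$ and $g\in G$, inversion preserves both fixed-point sets, $\tilde U^{g}=\tilde U^{g^{-1}}$, and centralizers, $Z_G(g)=Z_G(g^{-1})$, so the local chart $\bigl(\tilde U^{g}, Z_G(g), \tilde\pi, \tilde U^{g}/Z_G(g)\bigr)$ used to describe $\cG_{(g)}$ is literally the same one that describes $\cG_{(g^{-1})}$. Defining $i$ chart by chart as the identity on these charts while relabelling the marked conjugacy class $(g)_{G_x}\mapsto (g^{-1})_{G_x}$, I would check that (i) the assignment is compatible with embeddings of orbifold charts, and (ii) it respects the equivalence relation generating $\cT_1$: if the injection $G_y\hookrightarrow G_x$ carries $(g)_{G_y}$ to $(g)_{G_x}$ then it carries $(g^{-1})_{G_y}$ to $(g^{-1})_{G_x}$. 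Hence $i$ glues to a global orbifold morphism $\cG_{(g)}\to\cG_{(g^{-1})}$, and the same construction applied to $g^{-1}$ provides a two-sided inverse; so $i$ is an orbifold isomorphism, and in particular $\dim\cG_{(g)}=\dim\cG_{(g^{-1})}$.

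\emph{The dimension formula.} Fix a connected component $\cG_{(g)}$ and use the invariant almost complex structure $J$ on $\cG$ that already underlies the definition of $\iota_{(g)}$, so that $\cN_{(g)}=e_{(g)}^*T\cG/T\cG_{(g)}$ is a complex vector bundle. By construction $T\cG_{(g)}$ is exactly the $(+1)$-eigenbundle of the $g$-action on $e_{(g)}^*T\cG$, so $g$ acts on $\cN_{(g)}$ without fixed vectors and one has the eigenbundle decomposition $\cN_{(g)}=\bigoplus_{\theta_{(g)}\in\Q\cap(0,1)}\cN(\theta_{(g)})$ with $g$ acting on $\cN(\theta_{(g)})$ by $e^{2\pi\sqrt{-1}\theta_{(g)}}$. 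Since $i$ identifies the underlying spaces and the two embeddings $e_{(g)}$, $e_{(g^{-1})}$, the normal bundles agree, $\cN_{(g^{-1})}=\cN_{(g)}$ with the \emph{same} $J$, only now carrying the automorphism $g^{-1}$; hence $\cN(\theta_{(g)})$ is the eigenbundle of $g^{-1}$ for the eigenvalue $e^{-2\pi\sqrt{-1}\theta_{(g)}}=e^{2\pi\sqrt{-1}(1-\theta_{(g)})}$, i.e. the eigenbundles of $\cN_{(g^{-1})}$ are the $\cN(1-\theta_{(g)})$ with $rank_\C\cN(1-\theta_{(g)})=rank_\C\cN(\theta_{(g)})$. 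Therefore
$$
\iota_{(g)}+\iota_{(g^{-1})}=\sum_{\theta_{(g)}}rank_\C\cN(\theta_{(g)})\,\bigl(\theta_{(g)}+(1-\theta_{(g)})\bigr)=\sum_{\theta_{(g)}}rank_\C\cN(\theta_{(g)})=rank_\C\cN_{(g)},
$$
and combining this with $rank_\C\cN_{(g)}=\dim\cG-\dim\cG_{(g)}$ (complex dimensions) gives the asserted equality.

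\emph{Expected main obstacle.} The argument has no hard step. The one point that deserves care is the identification $\cN_{(g^{-1})}=\cN_{(g)}$ \emph{together with their complex structures}: one must check on orbifold charts that the $g$-eigenvalue $e^{2\pi\sqrt{-1}\theta}$ passes to the $g^{-1}$-eigenvalue $e^{-2\pi\sqrt{-1}\theta}$, and not to $e^{2\pi\sqrt{-1}\theta}$ coming from the opposite complex structure on a real $2$-plane — which is why it is important to carry the single invariant $J$ through the whole computation rather than reconstructing a complex structure separately for each sector. I would also flag explicitly that $\dim$ must be read as complex dimension here, consistently with the normalization of $\iota$ in the excerpt (the real-dimension identity would carry a factor of $2$ on the degree-shifting terms).
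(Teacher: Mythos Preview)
The paper does not supply its own proof of this lemma; it is quoted from \cite{ALR} without argument. Your proof is correct and is the standard one: the chart-by-chart identity $\tilde U^{g}=\tilde U^{g^{-1}}$, $Z_G(g)=Z_G(g^{-1})$ gives the isomorphism, and the eigenvalue pairing $\theta\leftrightarrow 1-\theta$ on the common normal bundle yields $\iota_{(g)}+\iota_{(g^{-1})}=rank_\C\cN_{(g)}=\dim_\C\cG-\dim_\C\cG_{(g)}$. Your caveat about complex versus real dimension is well taken and consistent with the paper's conventions.
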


Now we come back to the discussion of orbifold Gromov-Witten theory.
First of all, we consider the definition of orbifold Riemann surface or orbicurve.

\begin{definition}
  A (nodal) orbicurve $\cC$ is a nodal marked Riemann surface with an orbifold structure as follows:
  \begin{enumerate}
    \item The singular point set of each component is contained in the set of marked points and nodal points;
    \item A neighborhood of a singular mark point is covered by the orbifold chart $(\mathbb{D},\Z_r,\phi)$, where the $\Z_r$-action is given by
$$
         z\mapsto e^{2n\pi \sqrt{-1}/r}z,\quad e^{2n\pi \sqrt{-1}/r}\in\Z_r.
$$
    \item A singular nodal point must satisfy the balance condition, i.e one of its neighborhoods can be uniformized by the chart $(\widetilde{U},\Z_s,\psi)$,
    where $\widetilde{U}=\{(z,w)\in\C^2|zw=1\}$, the $\Z_s$-action is given by $(z,w)\mapsto (e^{2n\pi \sqrt{-1}/s}z,e^{-2n\pi \sqrt{-1}/s}w),\quad e^{2n\pi \sqrt{-1}/r}\in\Z_s$
  \end{enumerate}
\end{definition}

Suppose $(\cG,\omega)$ is a symplectic orbifold with a tamed complex structure $J$,
$\cC$ is an orbicurve , $f:\cC\rightarrow\cG$ is a $J$-holomorphic orbifold morphism.
If $x\in |\cC|$ is a singular point of $\cC$, $f$ maps $x$ to $y\in |\cG|$, and induces a homomorphism  between their local group $\lambda_f:\Z_r\rightarrow G_y$.
$f$ is called representable if $\lambda_f$ is injective for all singular point $x$. Similar to the manifold case, we can define:

\begin{definition}
 A stable orbifold morphism $f:\cC\rightarrow\cG$ is a representable ,$J$-holomorphic morphism from an orbicurve $\cC$ to $\cG$ with a finite automorphism.
 The moduli space $\overline{\mathcal{M}}_{g,m,A}(\mathcal{G})$ consists of all the equivalence class of stable orbifold morphism of genus $g$, $m$ marked points and degree $A\in H_2(|\cG|,\Z)$.
 ($f$ and $f'$ are said to be equivalent if $\exists \phi \in Aut(\cC)$ such that $f'=f\circ \phi$ up to an $R$-equivalence. )
\end{definition}

For each marked point $x_i$, there is an evaluation map
\begin{eqnarray*}
  ev_i:\overline{\mathcal{M}}_{g,m,A}(\mathcal{G}) &\rightarrow & \bigwedge\cG \\
          (\cC,f) & \mapsto & (y_i, (g_i)_{G_{y_i}})
\end{eqnarray*}
where $f$ maps $x_i$ to $y_i$, and $g_i=\lambda_f(\sigma)$. (Here $\sigma$ is the generator of the local group $\Z_i$ of $x_i$.)

We can use the decomposition of $\bigwedge \cG$ to decompose $\overline{\mathcal{M}}_{g,m,A}(\mathcal{G})$ into components:
$$
     \overline{\mathcal{M}}_{g,m,A}(\mathcal{G})=\bigsqcup_{(g_i)\in\cT_{\cG}}\overline{\mathcal{M}}_{g,m,A}(\mathcal{G})((g_1),\ldots,(g_m)),
$$
where $\overline{\mathcal{M}}_{g,m,A}(\mathcal{G})((g_1),\ldots,(g_m))$ is the component being mapped into $\cG_{(g_i)}$ by $ev_i$.
For simplicity, we set ${(\bf g})=((g_1),\ldots,(g_m))$, denote the component by $\overline{\mathcal{M}}_{g,({\bf g}),A}(\mathcal{G})$.

Chen-Ruan \cite{CR2} observed that each component of the moduli space has a virtual fundamental class of the expected dimension.

\begin{proposition}\label{abdim}
  The moduli space $\overline{\mathcal{M}}_{g,({\bf g}),A}(\mathcal{G})$  carries a virtual fundamental cycle $[\overline{\mathcal{M}}_{g,({\bf g}),A}(\mathcal{G})]^{vir}$ with the expected dimension
$$
    vdim \overline{\mathcal{M}}_{g,({\bf g}),A}(\mathcal{G})=c_1(A)+(n-3)(1-g)+m-\iota_{({\bf g})},
$$
where $\iota_{({\bf g})}=\Sigma_{i=1}^m \iota_{(g_i)}$ and $\iota_{(g_i)}$ is the degree shifting number for $\cG_{(g)}$.
\end{proposition}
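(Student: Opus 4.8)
The statement really has two components: the existence of a virtual fundamental cycle, and the identification of its dimension with the displayed number. For the first I would simply invoke the standard construction: after putting a Kuranishi (virtual neighbourhood) structure on the moduli space of stable $J$-holomorphic orbifold maps, or equivalently after a generic perturbation of the $\bar\partial_J$-equation, one obtains a cycle $[\overline{\mathcal{M}}_{g,(\mathbf{g}),A}(\mathcal{G})]^{vir}$ whose dimension is the index of the linearized problem; this is the orbifold analogue of the Li--Tian/Fukaya--Ono construction carried out in \cite{CR2}. So the content to be proved is the \emph{index computation}, and the plan is to do it pointwise at an unobstructed stable map $(\cC,f)$ and then use that the index is a deformation invariant.

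At such a point the Zariski tangent/obstruction spaces are $\ker$ and $\mathrm{coker}$ of the linearized Cauchy--Riemann operator $D_f\bar\partial$ acting on sections of $f^*T\cG$ over the orbicurve $\cC$, together with the deformations of the pointed orbicurve modulo its infinitesimal automorphisms. Writing $D=\sum_{i=1}^m x_i$ for the marked-point divisor, the complex virtual dimension is
$$
 vdim = \chi(\cC, f^*T\cG) - \chi(\cC, T_{\cC}(-D)).
$$
For the second term, $T_{\cC}(-D)=(\omega_{\cC}^{\log})^{\vee}$ is pulled back from the coarse curve, so the cyclic isotropy at each marked point acts trivially on its fibre, and the balance condition makes the two local weights at each node cancel; hence $\chi(\cC, T_{\cC}(-D))=3-3g-m$, exactly as for a smooth genus-$g$ curve with $m$ marked points (here $g$ is the genus of the underlying curve, and the formula remains valid in the unstable range since we take a difference of cohomologies). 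For the first term I would apply the Kawasaki--Riemann--Roch index formula for a complex-linear Cauchy--Riemann operator on an orbibundle over an orbicurve:
$$
 \chi(\cC, f^*T\cG) = \langle c_1(T\cG), A\rangle + n(1-g) - \sum_{x\ \text{orbifold point}} \mathrm{age}\big(\lambda_f(\sigma_x),\, f^*T\cG|_x\big),
$$
with no net contribution from nodes, again by the balance condition. The only orbifold marked points are the $x_i$; there $\lambda_f$ sends the generator $\sigma_i$ of $\Z_{r_i}$ to $g_i$, and the age of the $g_i$-action on $f^*T\cG|_{x_i}=T_{y_i}\cG$ is, by the eigenbundle decomposition $\cN_{(g_i)}=\bigoplus_{\theta}\cN(\theta)$ used to define the degree shifting number, exactly $\iota_{(g_i)}$ (the summand $T\cG_{(g_i)}$ on which $g_i$ acts trivially contributes $0$). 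Combining the two Euler characteristics,
$$
 vdim = \langle c_1(T\cG),A\rangle + n(1-g) - \textstyle\sum_i \iota_{(g_i)} - (3-3g-m) = c_1(A) + (n-3)(1-g) + m - \iota_{(\mathbf{g})},
$$
which is the claimed formula.

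The main obstacle is precisely the orbifold index computation: getting the local contributions at the special points right. One must show the age at a marked point equals Chen--Ruan's degree shifting number $\iota_{(g_i)}$ — essentially a matter of unwinding the definition of $\iota_{(g)}$ via the canonical $g$-action on $e_{(g)}^*T\cG/T\cG_{(g)}$ — and verify that the nodal points contribute nothing because of the balance condition built into the definition of a nodal orbicurve. One should also note that the intermediate quantities are a priori rational while the final virtual dimension must be of the expected integral type in the relevant cases; this is automatic once the age terms are matched with $\iota_{(\mathbf{g})}$ and the nodal terms cancel.
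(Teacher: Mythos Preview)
The paper does not actually prove this proposition; it simply attributes the result to Chen--Ruan \cite{CR2} (see the sentence immediately preceding the statement). Your sketch via orbifold (Kawasaki) Riemann--Roch --- computing $\chi(\cC,f^*T\cG)$ with age corrections at the marked points and subtracting $\chi(\cC,T_\cC(-D))=3-3g-m$ --- is correct and is essentially the argument in \cite{CR2}, so you have supplied more detail than the paper itself does; there is nothing further to compare.
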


Now we can define the orbifold Gromov-Witten invariants as:
$$
      \Psi^{\mathcal{G}}_{(A,g,m,(\mathbf{g}))}(\alpha_1,\ldots,\alpha_m)=\int_{[\overline{\mathcal{M}}_{g,({\bf g}),A}(\mathcal{G})]^{vir}}\prod_i ev^*_i(\alpha_i),
$$
where $\alpha_i\in H^*(\cG_{(g_i)})$.

Let $(\cG,\mathcal{Z})$ be a relative pair, which means that $\mathcal{Z}$ is a symplectic divisor of $\cG$, $\cN$ is its normal bundle.
Similar to manifold case, we will consider the moduli space of all the $J$-holomorphic maps $f:\cC\rightarrow\cG$ intersecting divisor $\mathcal{Z}$ in finite relative marked point with prescribed contact order.
If we fix a relative marked point $x$, $f(x)=y$, then there is an orbifold atlas $(\widetilde{U},\Z_r,\phi)$ near $x$, and $(\widetilde{V},G_y,\psi)$ near $y$, such that $f$ can be lifted to be a smooth map $\widetilde{f}:\widetilde{U}\rightarrow\widetilde{V}$.
The (fractional) contact order $\ell_x$ is defined to be $\frac{d}{r}$. Here $d$ is the contact order of $\widetilde{f}$ and $r$ is the order of the local group of $x$.

Note that the moduli space can be compactified via similar scheme to the manifold case. Denote $\mathcal{Q}:=\mathbb{P}(\cN\oplus\C)$, we can glue $m$ copies of $\mathcal{Q}$ together with identifying one's infinity section $\mathcal{Z}_{i,\infty}$ to another's zero section $\mathcal{Z}_{i+1,0}$.
Denote the result space by $\mathcal{Q}_m$, and $\mathcal{Z}_{i,\infty}=\mathcal{Z}_{i+1,0}$ by $\mathcal{Z}_i$.
Set $\mathcal{G}_m:=\cG\wedge_{\mathcal{Z}}\mathcal{Q}_m$. Then we have

\begin{definition}
  A stable relative orbifold holomorphic morphism $f:\cC\rightarrow\cG_m$ is a representable, $J$-holomorphic morphism satisfying:
  \begin{enumerate}
    \item The rigid components are mapped into $\cG$ and the rubber components are mapped into $\mathcal{Q}_m$.
    \item The relative marked points are mapped into $\mathcal{Z}_{m,\infty}$ and the sum of fractional contact orders equals to $\mathcal{Z}\cdot A$.
    \item The relative nodes are mapped into Sing$\cG_m$ satisfying balanced condition that the two branches at the node are mapped to different irreducible components of $\cG_m$ and the contact orders to $\mathcal{Z}_{i,0}=\mathcal{Z}_{i-1,\infty}$ are equal.
    \item The automorphism group is finite.
  \end{enumerate}
  $f$ and $f'$ are said to be equivalent if there exist $\phi\in Aut(\cC)$ and $\psi \in Aut(\mathcal{Q}_l)$, such that $f\circ\phi=\psi\circ f'$ up to an $R$-equivalence.
  The moduli space $\overline{\mathcal{M}}_{g,l,A,T_k}(\mathcal{G},\mathcal{Z})$ consists of all stable relative orbifold morphism with genus $g$, homologous class $A$, $l$ absolute mark points, $k$ relative marked points with the contact orders prescribed by $T_k=(\ell_1,\ldots,\ell_k)$.
\end{definition}

Similar to the case of manifold, For each absolute marked point $x_i$, we have an evaluation map:
$$
    ev_i:\overline{\mathcal{M}}_{g,l,A,T_k}(\mathcal{G},\mathcal{Z})\rightarrow \bigwedge\cG, \quad i=1, \cdots, l.
$$
For each relative marked point $y_i$, we have a relative evaluation map :
$$
     ev_j^r:\overline{\mathcal{M}}_{g,l,A,T_k}(\mathcal{G},\mathcal{Z})\rightarrow \bigwedge\mathcal{Z}.
$$
Let $({\bf g})=\{(g_1),\ldots,(g_l)\}$, $({\bf h})=\{(h_1),\ldots,(h_k)\}$. The decomposition of $\bigwedge\cG$ and$\bigwedge\mathcal{Z}$ induces a decomposition of the moduli space as follows
$$
       \overline{\mathcal{M}}_{g,l,A,T_k}(\mathcal{G},\mathcal{Z})=\bigsqcup_{({\bf g}),({\bf h})}\overline{\mathcal{M}}_{g,({\bf g}),A,({\bf h}),T_k}(\mathcal{G},\mathcal{Z}).
$$
Chen-Li-Sun-Zhao \cite{CLSZ}  and Abramovich-Fantechi \cite{AF} show the following proposition.

\begin{proposition}\label{redim}
   The moduli space $\overline{\mathcal{M}}_{g,({\bf g}),A,({\bf h}),T_k}(\mathcal{G},\mathcal{Z})$ carries a virtual fundamental class $[\overline{\mathcal{M}}_{g,({\bf g}),A,({\bf h}),T_k}(\mathcal{G},\mathcal{Z})]^{vir}$ with the expected dimension
\begin{equation}\label{redim-1}
    vdim\overline{\mathcal{M}}_{g,({\bf g}),A,({\bf h}),T_k}(\mathcal{G},\mathcal{Z})=c_1(A)+(3-n)(g-1)+m+k-\iota_{(\mathbf{g})}^\mathcal{G}-\iota_{(\mathbf{h})}^\mathcal{G}-\sum_i[\ell_i],
\end{equation}
where $dim_{\mathbb{R}}M=2n$, $\iota_{(\mathbf{g})}^\mathcal{G}=\sum_i\iota_{(g_i)}^\mathcal{G}$, $\iota_{(g_i)}^\mathcal{G}$ is the degree shifting number of the component $\cG_{(g_i)}$ of $\bigwedge\mathcal{G}$. $\iota_{(\mathbf{h})}^\mathcal{G}$ is defined similarly.  $[\ell_i]$ is the biggest integer less than the fractional contact order $\ell_i$.
\end{proposition}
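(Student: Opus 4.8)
The conclusion is that of \cite{CLSZ}, and of \cite{AF} in the algebraic category; what I would present is the deformation-theoretic dimension count behind the construction of the virtual class there. The plan is to read $vdim$ off the obstruction theory of $\overline{\mathcal{M}}_{g,({\bf g}),A,({\bf h}),T_k}(\mathcal{G},\mathcal{Z})$ taken \emph{relative} to the smooth base $\mathfrak{A}\times\mathfrak{M}$, where $\mathfrak{A}$ is the Artin stack of expansions $\mathcal{G}_m=\mathcal{G}\wedge_{\mathcal{Z}}\mathcal{Q}_m$ of the pair $(\mathcal{G},\mathcal{Z})$ --- which is $0$-dimensional, the $\mathbb{C}^{*}$'s rescaling the rubber components $\mathcal{Q}_i$ appearing only as automorphisms --- and $\mathfrak{M}$ is the smooth stack of prestable $(m+k)$-pointed genus $g$ orbicurves, of dimension $3g-3+m+k$. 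Over this base the tangent--obstruction complex of a stable relative morphism $f:\mathcal{C}\to\mathcal{G}_m$ is $Rf_{*}f^{*}T\mathcal{G}_m(-\log\mathcal{Z}_{m,\infty})$, the derived pushforward of the pullback of the relative logarithmic tangent sheaf along the infinity section into which the relative marked points map; hence
\begin{equation*}
vdim\,\overline{\mathcal{M}}_{g,({\bf g}),A,({\bf h}),T_k}(\mathcal{G},\mathcal{Z})=\chi\bigl(\mathcal{C},\,f^{*}T\mathcal{G}_m(-\log\mathcal{Z}_{m,\infty})\bigr)+\bigl(3g-3+m+k\bigr),
\end{equation*}
and everything is reduced to an orbifold Riemann--Roch computation on $\mathcal{C}$.

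I would evaluate this Euler characteristic by the Kawasaki--Riemann--Roch theorem; since the virtual dimension is deformation invariant, it suffices to compute it at a stable map whose domain $\mathcal{C}$ is a smooth irreducible orbicurve mapping to $\mathcal{G}$ itself (no expansion), so that the relevant sheaf is $f^{*}T\mathcal{G}(-\log\mathcal{Z})$ and the only orbifold points of $\mathcal{C}$ are the marked points. On coarse spaces the residue sequence $0\to T\mathcal{G}(-\log\mathcal{Z})\to T\mathcal{G}\to\mathcal{N}\to 0$ shows that the logarithmic twist lowers the degree of $f^{*}T\mathcal{G}$ by the total intersection multiplicity of $f$ with $\mathcal{Z}$, which is $\sum_i\ell_i=\mathcal{Z}\cdot A$ by condition (2) in the definition of a stable relative morphism; since $f^{*}T\mathcal{G}$ has rank $n$, this produces the naive term $c_1(A)-\mathcal{Z}\cdot A+n(1-g)$. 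The Kawasaki correction splits into local contributions at the orbifold marked points: at an absolute marked point with monodromy $g_i$ the defect is $-\iota_{(g_i)}^{\mathcal{G}}$, directly from the definition of the degree shifting number as the sum of the eigenvalue exponents of $g_i$ on $T\mathcal{G}$; at a relative marked point with monodromy $h_i$ the logarithmic normal direction is $h_i$-fixed, so the eigenvalues of $h_i$ on $T\mathcal{G}(-\log\mathcal{Z})$ are those on $T\mathcal{Z}$ together with $1$, and the defect there is $-\iota_{(h_i)}^{\mathcal{Z}}$. Assembling,
\begin{equation*}
vdim\,\overline{\mathcal{M}}_{g,({\bf g}),A,({\bf h}),T_k}(\mathcal{G},\mathcal{Z})=c_1(A)-\mathcal{Z}\cdot A+n(1-g)+3g-3+m+k-\iota_{(\mathbf{g})}^{\mathcal{G}}-\iota_{(\mathbf{h})}^{\mathcal{Z}}.
\end{equation*}

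It then remains to match this with \eqref{redim-1} via a local model at each relative marked point $x_i$. If the local group at $x_i$ is $\Z_{r_i}$, acting on the uniformizing disk by $z\mapsto e^{2\pi\sqrt{-1}/r_i}z$, and the lift $\widetilde f$ satisfies $\widetilde f^{*}(\text{normal coordinate})=z^{d_i}+\cdots$, then equivariance of $\widetilde f$ forces $h_i$ to act on the rank-one normal bundle $\mathcal{N}$ by $e^{2\pi\sqrt{-1}\,d_i/r_i}$, so the $\mathcal{N}$-part of the degree shifting of $h_i$ on $T\mathcal{G}$ is the fractional part $\{\ell_i\}$ of $\ell_i=d_i/r_i$; that is, $\iota_{(h_i)}^{\mathcal{G}}=\iota_{(h_i)}^{\mathcal{Z}}+\{\ell_i\}$. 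Combining this with $\sum_i\ell_i=\mathcal{Z}\cdot A$ gives $-\mathcal{Z}\cdot A-\iota_{(\mathbf{h})}^{\mathcal{Z}}=-\sum_i\bigl([\ell_i]+\{\ell_i\}\bigr)-\iota_{(\mathbf{h})}^{\mathcal{Z}}=-\sum_i[\ell_i]-\iota_{(\mathbf{h})}^{\mathcal{G}}$, with $[\ell_i]$ the integer part of $\ell_i$; substituting and using $n(1-g)+3g-3=(3-n)(g-1)$ yields exactly \eqref{redim-1}.

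The step I expect to be most delicate is this last one, together with the local analysis around the relative marked points and relative nodes of the expanded target $\mathcal{G}_m$: one must check that the representability and balancing conditions rigidify the logarithmic normal direction, so that nothing spurious is contributed to deformations or obstructions there, and pin down --- with the correct orientation of $\mathcal{N}$ --- the precise link between $(h_i,\ell_i)$ and the normal degree shifting. The two inputs I would simply import are the identification of the relative obstruction theory with $Rf_{*}f^{*}T\mathcal{G}_m(-\log\mathcal{Z}_{m,\infty})$ over $\mathfrak{A}\times\mathfrak{M}$, and the existence of a virtual fundamental class of the expected dimension; these are precisely what \cite{CLSZ} and \cite{AF} prove.
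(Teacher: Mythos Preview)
The paper does not supply its own proof of this proposition: it is quoted directly from \cite{CLSZ} (symplectic) and \cite{AF} (algebraic), and the text simply says those authors ``show the following proposition.'' So there is no argument in the paper to compare against; your sketch of the obstruction--theoretic dimension count behind those references is a reasonable and essentially correct outline.

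What is worth pointing out is that your derivation anticipates, in reverse, the paper's own subsequent work. The intermediate formula you obtain,
\[
vdim = c_1(A)+(3-n)(g-1)+m+k-\iota_{(\mathbf{g})}^{\mathcal{G}}-\iota_{(\mathbf{h})}^{\mathcal{Z}}-\sum_i\ell_i,
\]
is exactly Corollary~\ref{re} (formula~\eqref{redim-2}), which the paper derives \emph{from} \eqref{redim-1} rather than the other way around. Likewise, your local model showing that equivariance forces $h_i$ to act on $\mathcal{N}$ by $e^{2\pi\sqrt{-1}d_i/r_i}$, hence $\iota_{(h_i)}^{\mathcal{G}}=\iota_{(h_i)}^{\mathcal{Z}}+\{\ell_i\}$, is precisely the content of the paper's Proposition~\ref{relation}. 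So you have independently reproduced the bridge between the two forms of the dimension formula that the authors regard as a useful simplification for their later dimension counts; your route (Riemann--Roch naturally spits out the $\iota^{\mathcal{Z}}$ version first) explains why \eqref{redim-2} is in some sense the more primitive expression.

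One small caution on conventions: the paper defines $[\ell_i]$ as ``the biggest integer less than $\ell_i$,'' which taken literally disagrees with the floor when $\ell_i\in\Z$. Your computation (and the paper's own use of the symbol in the proof of Corollary~\ref{re}) both treat $[\ell_i]$ as the floor; that is the reading under which \eqref{redim-1} and \eqref{redim-2} are actually equivalent, so you are right to use it.
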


The orbifold relative Gromov-Witten invariants are defined as
\begin{eqnarray*}
      & & \Psi^{(\mathcal{G},\mathcal{Z})}_{(A,g,(\mathbf{g}),({\bf h}),T_k})(\alpha_1,\ldots,\alpha_m|\beta_1,\ldots,\beta_k)\\
      & = & \frac{1}{|Aut(\mathcal{T}_k)|}\int_{[\overline{\mathcal{M}}_{g,({\bf g}),A,({\bf h}),T_k}
       (\mathcal{G},\mathcal{Z})]^{vir}}\prod_i ev^*_i(\alpha_i)\prod_jev^{r,*}_j(\beta_j),
\end{eqnarray*}
where $\alpha\in H^*(\cG_{(g_i)})$, $\beta_j\in H^*(\mathcal{Z}_{(h_j)})$, $\mathcal{T}_k=\{(\ell_1,(h_1),\beta_1),\ldots,(\ell_k,(h_k),\beta_k)\}.$

Li-Ruan \cite{LR} gave a degeneration formula  which expresses the absolute Gromoc-Witten invariants of a manifold $M$ in terms of the  relative Gromov-Wiiten invariants of its symplectic cuts.
Chen-Li-Sun-Zhao\cite{CLSZ} extended this degeneration formula to the orbifold case in the differential category. Abramovich-Fantechi \cite{AF} also obtained this formula in the case of algebraic stacks.

Suppose $(\cG,\omega)$ is a symplectic orbifold. After performing symplectic cutting on $\cG$ we obtain two symplectic orbifold $\overline{\cG}^\pm$.
One can glue two pseudoholomorphic curve $(u^+,u^-)$ in $\overline{\cG}^+$, $\overline{\cG}^-$ with the balance condition to obtain a pseudoholomorphic curve $u$ in $\cG$.
Now we have a projective map
$$
          \pi:\cG\rightarrow\overline{\cG}^+\wedge_{\mathcal{Z}}\overline{\cG}^-,
$$
where $\overline{\cG}^+\wedge_{\mathcal{Z}}\overline{\cG}^-$ is the orbifold obtained via gluing $\overline{\cG}^\pm$ along the divisor $\mathcal{Z}$.
$\pi$ induces a homomorphism
$$
          \pi_*:H_2(|\cG|,\Z)\rightarrow H_2(|\overline{\cG}^+\wedge_{\mathcal{Z}}\overline{\cG}^-|,\Z).
$$
Then $(u^+,u^-)$ defines a homology class $[u^++u^-]\in H_2(|\overline{\cG}^+\wedge_{\mathcal{Z}}\overline{\cG}^-|,\Z)$. Moreover, we have
$$
             [u^++u^-]=\pi_*([u]).
$$

Note that $\pi_*$ is not injective, and elements in $\ker \pi_*$ are called vanishing cycle (cf. \cite{LR}). Let $[A]:=A+\ker \pi_*$. Define
$$
              \Psi^{\mathcal{G}}_{([A],g,m,(\mathbf{g}))}(\alpha_1,\ldots,\alpha_m)=\sum_{B\in[A]}\Psi^{\mathcal{G}}_{(B,g,m,(\mathbf{g}))}(\alpha_1,\ldots,\alpha_m).
$$
By Gromov's compactness theorem, the summation of right hand side is finite.

While $\cG$ degenerates to $\overline{\cG}^+\wedge_{\mathcal{Z}}\overline{\cG}^-$ , the moduli space $\overline{\mathcal{M}}_{g,({\bf g}),[A]}(\mathcal{G})$ also degenerates to $\overline{\mathcal{M}}_{g,({\bf g}),\pi_*[A]}(\overline{\cG}^+\wedge_{\mathcal{Z}}\overline{\cG}^-)$,
which consists of the components indexed by the possible relative type $\Gamma$ of $u^\pm$.
Using the virtual neighborhood techniques \cite{R,CLW}, Chen-Li-sun-Zhao\cite{CLSZ} defined GW invariants $\Phi_\Gamma$ for each component indexed by $\Gamma$ and proved
$$
         \Psi^{\mathcal{G}}_{([A],g,m,(\mathbf{g}))}=\sum \Psi_\Gamma.
$$
For simplicity, we will assume that $u^\pm$ has just one component. Denote its index by
$$
      \Gamma=\{A^+,g^+, m^+, ({\bf g}^+),({\bf h}^+),T_k^+;A^-,g^-,m^-,({\bf g}^-),({\bf h}^-),T_k^-\}.
$$
satisfying
\begin{enumerate}
  \item $A^++A^-=\pi_*([A])$, $g=g^++g^-+k-1$, $({\bf g}^+)\cup({\bf g}^-)=({\bf g})$, $m^+ + m^- = m$,
  \item $({\bf h}^+)=(({\bf h}^-)^{-1})$, $T_k^+=T_k^-$.
\end{enumerate}
Then we have the following degeneration formula

\begin{theorem}
  Suppose that $\alpha^\pm_i\in H^*(\bigwedge \cG^\pm)$ with $\alpha^+_i|_{\bigwedge \mathcal{Z}}=\alpha^-_i|_{\bigwedge \mathcal{Z}}$ defines a class $(\alpha^+_i,\alpha^-_i)$ in $H^*(\bigwedge(\overline{\cG}^+\wedge_{\mathcal{Z}}\overline{\cG}^-))$.
  Let $\alpha_i=\pi^*(\alpha^+_i,\alpha^-_i)\in H^*(\bigwedge \cG)$, $i=1,2,\cdots,m$. Then for
$$
         \Gamma=\{A^+,g^+,({\bf g}^+),({\bf h}^+),T_k^+;A^-,g^-,({\bf g}^-),({\bf h}^-),T_k^-\},
$$
we have
\begin{eqnarray*}
    \Psi_\Gamma (\alpha_1,\ldots,\alpha_m)&= & \sum_I C(\Gamma,I)\Psi^{(\overline{\cG}^+,\mathcal{Z})}_{(A^+,g^+,(\mathbf{g}^+),({\bf h}^+),T_k^+)}(\alpha^+_{i_1},\ldots,\alpha^+_{i_{m^+}}|\beta^I)\\
       &  & \times\Psi^{(\overline{\cG}^-,\mathcal{Z})}_{(A^-,g^-,(\mathbf{g}^-),({\bf h}^-),T_k^-)}(\alpha^-_{j_1},\ldots,\alpha^-_{j_{m^-}}|\beta_I^*), \nonumber
\end{eqnarray*}
where $\{i_1, \cdots, i_{m^+}\}\cup \{j_1,\cdots,j_{m^-}\} = \{1, \cdots, m\}$, $\beta^I$ runs over all the tuples $(\beta_1,\ldots,\beta_k)$. $\beta_i$ is a basis of $H^*(\mathcal{Z}_{(h_i)})$, and $\beta_I^*$ denotes the dual basis of $\beta^I$,
$C(\Gamma,I):=|Aut(\mathcal{T}(\Gamma,b^I))|\prod^k_{i=1}\ell_i$, where $\mathcal{T}(\Gamma,b^I)=\{(\ell_1,(h_1),\beta_1),\ldots,(\ell_k,(h_k),\beta_k)\}$
\end{theorem}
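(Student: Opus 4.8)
The plan is to reduce the identity to the gluing formula for virtual fundamental classes of relative moduli spaces, and then to carry out a Kunneth-type splitting of the resulting integral. Recall from the degeneration theorem of Chen-Li-Sun-Zhao \cite{CLSZ} stated above that, as $\cG$ degenerates to $\overline{\cG}^+\wedge_{\mathcal{Z}}\overline{\cG}^-$, the moduli space $\overline{\mathcal{M}}_{g,({\bf g}),[A]}(\mathcal{G})$ degenerates to $\overline{\mathcal{M}}_{g,({\bf g}),\pi_*[A]}(\overline{\cG}^+\wedge_{\mathcal{Z}}\overline{\cG}^-)$, which is stratified by the admissible relative types $\Gamma$, and that $\Psi_\Gamma$ is by definition the integral of $\prod_i ev_i^*\alpha_i$ against the virtual class of the stratum $\overline{\mathcal{M}}_\Gamma$ indexed by $\Gamma$. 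Hence it suffices to establish the displayed identity for a fixed $\Gamma$; summing over $\Gamma$ then reproduces the full degeneration formula.

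First I would identify the stratum $\overline{\mathcal{M}}_\Gamma$ as a fibered product. A relative stable map of type $\Gamma$ into $\overline{\cG}^+\wedge_{\mathcal{Z}}\overline{\cG}^-$ is a pair $(f^+,f^-)$, where $f^\pm$ is a relative stable map into $(\overline{\cG}^\pm,\mathcal{Z})$ carrying the numerical data recorded in $\Gamma$, matched along the $k$ relative marked points, which become the relative nodes. The balanced condition at the $j$-th node says precisely that $ev^{r,+}_j(f^+)$ and $ev^{r,-}_j(f^-)$ have the same image in $|\mathcal{Z}|$, inverse local monodromies $(h^+_j)=((h^-_j)^{-1})$, and equal contact orders. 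Composing the minus-side relative evaluation maps with the canonical isomorphisms $\mathcal{Z}_{(h)}\to\mathcal{Z}_{(h^{-1})}$ of Lemma \ref{dim}, this gives an identification
\[
\overline{\mathcal{M}}_\Gamma \;=\; \overline{\mathcal{M}}^+_\Gamma \times_{(\bigwedge\mathcal{Z})^k} \overline{\mathcal{M}}^-_\Gamma ,
\]
where $\overline{\mathcal{M}}^\pm_\Gamma=\overline{\mathcal{M}}_{g^\pm,({\bf g}^\pm),A^\pm,({\bf h}^\pm),T_k^\pm}(\overline{\cG}^\pm,\mathcal{Z})$, the map on the plus factor is $(ev^{r,+}_1,\ldots,ev^{r,+}_k)$, and the map on the minus factor is $(ev^{r,-}_1,\ldots,ev^{r,-}_k)$ post-composed with those isomorphisms.

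The heart of the matter is the compatibility of virtual classes under this identification: the relative deformation-obstruction theory of $\overline{\mathcal{M}}_\Gamma$ sits in a distinguished triangle together with those of $\overline{\mathcal{M}}^\pm_\Gamma$ and the tangent complex of $(\bigwedge\mathcal{Z})^k$ coming from smoothing the $k$ relative nodes, so that
\[
[\overline{\mathcal{M}}_\Gamma]^{vir} \;=\; \Delta^!\bigl([\overline{\mathcal{M}}^+_\Gamma]^{vir}\times[\overline{\mathcal{M}}^-_\Gamma]^{vir}\bigr),
\]
where $\Delta$ is the diagonal of $(\bigwedge\mathcal{Z})^k$ (the graph of the product of the isomorphisms of Lemma \ref{dim}). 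I expect this to be the main obstacle: in the orbifold setting one must keep careful track of the gerbe and automorphism data at the relative nodes, as well as of the factor $\prod_{i=1}^k\ell_i$ that arises because the $j$-th node is locally modeled on an $\ell_j$-fold branched smoothing. This is, however, exactly what the virtual-neighborhood construction of Chen-Li-Sun-Zhao \cite{CLSZ}, and the parallel algebraic argument of Abramovich-Fantechi \cite{AF}, supply, so I would invoke it rather than reprove it.

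Granting the gluing formula, what remains is bookkeeping. Since $\alpha_i=\pi^*(\alpha^+_i,\alpha^-_i)$ and, on $\overline{\mathcal{M}}_\Gamma$, the $i$-th absolute marked point lies on the component mapped to $\overline{\cG}^+$ or to $\overline{\cG}^-$ according as $i\in\{i_1,\ldots,i_{m^+}\}$ or $i\in\{j_1,\ldots,j_{m^-}\}$, one has $ev_i^*\alpha_i=ev^{+,*}_i\alpha^+_i$, respectively $ev^{-,*}_i\alpha^-_i$, on $\overline{\mathcal{M}}_\Gamma$. Applying the projection formula for $\Delta^!$ together with the Kunneth expansion $\sum_I\beta^I\otimes\beta^*_I$ of the class Poincare dual to the diagonal of $(\bigwedge\mathcal{Z})^k$ (here $\{\beta^I\}$ and $\{\beta^*_I\}$ are bases of $H^*((\bigwedge\mathcal{Z})^k)$ dual under the intersection pairing, assembled componentwise from bases of the $H^*(\mathcal{Z}_{(h_i)})$), the integral $\int_{[\overline{\mathcal{M}}_\Gamma]^{vir}}\prod_i ev_i^*\alpha_i$ decomposes as a sum over $I$ of the product of an integral over $[\overline{\mathcal{M}}^+_\Gamma]^{vir}$ of $\prod ev^{+,*}\alpha^+\cdot\prod ev^{r,*}\beta^I$ with an integral over $[\overline{\mathcal{M}}^-_\Gamma]^{vir}$ of $\prod ev^{-,*}\alpha^-\cdot\prod ev^{r,*}\beta^*_I$, the isomorphisms of Lemma \ref{dim} being absorbed into the choice of dual basis. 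Matching these two integrals against the definition of $\Psi^{(\overline{\cG}^\pm,\mathcal{Z})}$, each normalized by $1/|Aut(\mathcal{T}_k^\pm)|$, and collecting the node-smoothing contributions produces exactly the weight $C(\Gamma,I)=|Aut(\mathcal{T}(\Gamma,b^I))|\prod_{i=1}^k\ell_i$. This yields the stated per-$\Gamma$ identity, and summing over $\Gamma$ recovers the degeneration formula.
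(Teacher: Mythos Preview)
The paper does not prove this theorem. It is stated in Section~2.5 as background, attributed to Chen--Li--Sun--Zhao \cite{CLSZ} (Theorem~6.2 there) and to Abramovich--Fantechi \cite{AF}; immediately after the statement the authors refer the reader to those sources for the general case. So there is no ``paper's own proof'' to compare against.

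Your sketch is the standard outline of how the degeneration formula is established in those references: identify the $\Gamma$-stratum as a fibered product of the two relative moduli spaces over $(\bigwedge\mathcal{Z})^k$, use the compatibility of virtual classes under gluing (this is where the factor $\prod_i\ell_i$ from node-smoothing and the automorphism factor enter), and then split the integral via the K\"unneth decomposition of the diagonal class. You correctly flag the gluing identity $[\overline{\mathcal{M}}_\Gamma]^{vir}=\Delta^!([\overline{\mathcal{M}}^+_\Gamma]^{vir}\times[\overline{\mathcal{M}}^-_\Gamma]^{vir})$ as the genuine content and defer it to \cite{CLSZ,AF}, which is exactly what the paper does. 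In that sense your proposal is not so much an independent proof as a roadmap of the cited proof, and it is accurate as such.
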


\begin{remark}\label{remark}
  \begin{enumerate}
    \item In the rest of the paper, we only need this special case that $u^\pm$ has at most one component.
    For the degeneration formula of general $\Gamma$, see Theorem 6.2 in \cite{CLSZ}.
    \item For the case of blow-up at a point, we can show that there is no vanishing 2-cycle (cf. Lemma 2.11 \cite{LR}). Then we have
$$
      \Psi^{\mathcal{G}}_{(A,g,m,(\mathbf{g}))}=\Psi^{\mathcal{G}}_{([A],g,m,(\mathbf{g}))}=\sum \Psi_\Gamma.
$$
   \end{enumerate}
\end{remark}

\begin{remark}
   Fix an index $\Gamma=\{A^+,g^+,({\bf g}^+),({\bf h}^+),T_k^+;A^-,g^-,({\bf g}^-),({\bf h}^-),T_k^-\}$, Set $\Gamma^\pm = \{A^\pm. g^\pm, ({\bf g}^\pm), ({\bf h}^\pm), T_k^\pm\}$.
Denote by $\overline{\mathcal M}_\Gamma$ the component of  $\overline{\mathcal{M}}_{g,({\bf g}),\pi_*[A]}(\overline{\cG}^+\wedge_{\mathcal{Z}}\overline{\cG}^-)$
corresponding to $\Gamma$, then from Proposition \ref{abdim}, we have
$$
    vdim \overline{\mathcal M}_\Gamma = c_1(A) + (n-3)(1-g) + m- \iota^\cG_{({\bf g})}.
$$
Denote by $\overline{\mathcal M}_{\Gamma^\pm}$ the corresponding moduli spaces $\overline{\mathcal M}_{g^\pm, m^\pm, ({\bf g}^\pm), A^\pm,({\bf h}^\pm), T_k^\pm}(\overline{\cG}^\pm, \mathcal{Z})$ respectively.
From the degeneration formula, we know that $\Psi_\Gamma$ is nonzero unless
$$
     vdim\overline{\mathcal M}_\Gamma = vdim\overline{\mathcal M}_{\Gamma^+} + vdim\overline{\mathcal M}_{\Gamma^-} - \dim\prod_j\mathcal{Z}_{(h_j)}.
$$
It follows from Lemma \ref{dim} that $\Psi_\Gamma$ in the degeneration formula is nonzero only if
\begin{eqnarray}\label{dim-sum}
  & & vdim\overline{\mathcal M}_{\Gamma^+} + vdim\overline{\mathcal M}_{\Gamma^-} \\
   & =& \sum_{i=1}^k(n-1-\iota^\mathcal{Z}_{(h_i)}-\iota^\mathcal{Z}_{(h_i^{-1})}) + c_1(A) + (n-3)(1-g) + m-\iota^\cG_{({\bf g})}.\nonumber
\end{eqnarray}
\end{remark}
This formula is a generalization of (5.1) in \cite{LR} to the orbifold category.

\section{Proof of main theorems}

In this section, we will prove our weighted blow-up formulae of orbifold Gromov-Witten invariants at a smooth point. The core of our proof is the dimension
counting of the moduli spaces. Note that we do have a formula (\ref{redim-1}) for the expected dimension of the moduli space $\overline{\mathcal M}_{g,({\bf g}),A,({\bf h}), T_k}(\cG, \mathcal{Z})$.
Since the formula (\ref{redim-1}) contains an undesirable notation [ ], it will make the computation unhandy. Before we prove our main results,
we want to modify the formula (\ref{redim-1}) to make it more easy to use.

After checking the orbifold structure a little more, we have

\begin{proposition}\label{relation}
  The moduli space $\overline{\mathcal{M}}_{g,(\mathbf{g}),A,(\mathbf{h}),T_k}(\mathcal{G},\mathcal{Z})$ is not empty only if for any $ i$
$$
        d_i\equiv r_i(\iota_{(h_i)}^\mathcal{G}-\iota_{(h_i)}^\mathcal{Z})    \quad mod (r_i),
$$
where $d_i$ is the contact order, $r_i$ is the multiplicity of the i-th mark point.
\end{proposition}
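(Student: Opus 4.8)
The plan is to reduce the assertion to a local computation at a single relative marked point, using only the equivariance of a local lift of $f$ together with the definition of the degree shifting number. Since the congruence to be proved involves only the discrete type $(d_i,r_i,(h_i))$, which is fixed on the whole moduli space, it suffices to exhibit one stable relative morphism in $\overline{\mathcal{M}}_{g,(\mathbf{g}),A,(\mathbf{h}),T_k}(\cG,\mathcal{Z})$ and verify the congruence on it. So fix $i$ and let $x=x_i$ be the $i$-th relative marked point of such a morphism $f\colon\cC\to\cG$, with $f(x)=y\in\mathcal{Z}$. By representability the local group $\Z_{r_i}$ of $\cC$ at $x$ injects into $G_y$ with image the cyclic group generated by $h_i=\lambda_f(\sigma)$, where $\sigma$ generates $\Z_{r_i}$; in particular $r_i$ is the order of $h_i$, matching the meaning of ``multiplicity''. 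Choose an orbifold chart $(\mathbb{D},\Z_{r_i},\phi)$ at $x$ on which $\sigma$ acts by $z\mapsto\zeta z$, $\zeta=e^{2\pi\sqrt{-1}/r_i}$, and an orbifold chart $(\widetilde{V},G_y,\psi)$ at $y$, so that $f$ lifts to a holomorphic $\widetilde{f}\colon\mathbb{D}\to\widetilde{V}$ equivariant through $\lambda_f$.

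Next I would normalize $\mathcal{Z}$ and the $G_y$-action: linearizing the $G_y$-action near $y$ (average a Hermitian metric) and using that $\mathcal{Z}$ is a $G_y$-invariant smooth divisor, one may choose linear coordinates $(v_1,\dots,v_{n-1},w)$ on $\widetilde{V}$ with $\mathcal{Z}=\{w=0\}$ and $w$ a $G_y$-eigencoordinate; write the $h_i$-action on the normal line as $w\mapsto\mu\,w$ with $\mu=e^{2\pi\sqrt{-1}c_i/r_i}$ and $c_i\in\{0,1,\dots,r_i-1\}$. The first key point is that $c_i/r_i=\iota^{\cG}_{(h_i)}-\iota^{\mathcal{Z}}_{(h_i)}$: near $y$ the normal bundle of $\cG_{(h_i)}$ in $\cG$ is the sum of the nontrivial $h_i$-eigenspaces of $T\widetilde{V}=T\{w=0\}\oplus\C\,\partial_w$, that of $\mathcal{Z}_{(h_i)}$ in $\mathcal{Z}$ is the sum of the nontrivial $h_i$-eigenspaces of $T\{w=0\}$, and since $\partial_w$ has $h_i$-eigenvalue $\mu$, the two degree shifting numbers computed from these eigenbundle decompositions (cf. the definition of $\iota$ and Lemma \ref{dim}) differ by exactly the age $c_i/r_i$ of $h_i$ on the normal line, with the age equal to $0$ precisely when $c_i=0$. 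Hence $r_i(\iota^{\cG}_{(h_i)}-\iota^{\mathcal{Z}}_{(h_i)})=c_i$.

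The second key point is the equivariance constraint. Let $g(z)$ be the $w$-component of $\widetilde{f}(z)$; it is holomorphic with $g(0)=0$, and by definition the contact order $d_i$ is its order of vanishing at $0$, so $g(z)=bz^{d_i}+O(z^{d_i+1})$ with $b\neq 0$. Equivariance of $\widetilde{f}$ in the normal direction gives $g(\zeta z)=\mu\,g(z)$, and comparing leading coefficients yields $\zeta^{d_i}=\mu$, i.e.\ $d_i\equiv c_i\pmod{r_i}$. Combined with the previous paragraph this is exactly $d_i\equiv r_i(\iota^{\cG}_{(h_i)}-\iota^{\mathcal{Z}}_{(h_i)})\pmod{r_i}$, which is the claimed necessary condition for non-emptiness.

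The step I expect to require the most care is the sign/orientation bookkeeping needed to be sure the congruence comes out with the stated sign and not its negative. In particular, if the chosen element of the compactified moduli space carries an expansion, its relative marked point sits on an infinity section $\mathcal{Z}_{m,\infty}\subset\mathbb{P}(\cN\oplus\C)$ rather than on $\mathcal{Z}\subset\cG$, so one must redo the local computation there: the normal coordinate to that infinity section lives in $\cN^{\vee}$, and one has to check that this, together with the convention in \cite{CLSZ} for which twisted sector of $\bigwedge\mathcal{Z}$ the relative evaluation map $ev^r_i$ records (whether $(h_i)$ or $(h_i^{-1})$ via the inertia involution), reproduces the same congruence. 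The remaining ingredients --- linearizing the finite group action, choosing $w$ as an eigencoordinate, and the reduction to a single marked point --- are routine.
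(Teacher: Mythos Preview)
Your proposal is correct and follows essentially the same approach as the paper: both arguments pick an element of the moduli space, pass to local charts at a relative marked point, write the normal component of the lifted map as $b z^{d_i}+O(z^{d_i+1})$, identify the $h_i$-action on the normal line with $e^{2\pi\sqrt{-1}(\iota^{\cG}_{(h_i)}-\iota^{\mathcal{Z}}_{(h_i)})}$ via the definition of the degree shifting number, and read off the congruence from equivariance of the leading term. Your extra paragraph about expansions and the $(h_i)$ versus $(h_i^{-1})$ convention is more caution than the paper supplies (the paper simply works on an element of the moduli space and invokes Lemma~\ref{lemma} to diagonalize the normal action), but it does not alter the argument.
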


\begin{proof}
  Suppose $(\mathcal{C},\mathbf{f})\in \mathcal{M}_{g,(\mathbf{g}),A,(\mathbf{h}),T_k}(\mathcal{G},\mathcal{Z}) $,
  then near relative mark point $x_i$, we have a uniformizing chart $(\mathbb{D},\mathbb{Z}_{r_i},\Phi)$,
  near $z=f(x_i)$, we have a uniformizing chart$(V\times \mathbb{C},G_z,\Psi)$,so we can express $f=(f_0,f_1)$ as (cf. p21 of \cite{CLSZ}):\\
$$
       f:\mathbb{D}\rightarrow V\times \mathbb{C},\quad f_0(w)=(\overline{f}(w),w^{d_i}+O(w^{d_i+1})).
$$
  Suppose $\lambda(f):\mathbb{Z}_{r_i}\rightarrow G_z$ is the homomorphism induced by $f_1$ and
$$
         \lambda(f)(\sigma)=h_i,
$$
where $\sigma$ is the generator of $\mathbb{Z}_{r_i}$.

  By lemma \ref{lemma}, without loss of generality, we may assume $h_i$ is of the form:
\begin{equation*}
    \left(\begin{array}{cc}
            * & 0\\
            0 & a(h_i)
            \end{array}\right).
\end{equation*}

Now, by the definition of degree shifting number, we have
$$
      a(h_i)=e^{2\pi (\iota_{(h_i)}^\mathcal{G}-\iota_{(h_i)}^\mathcal{Z})\sqrt{-1}}.
$$

  Since $f_0(gw)=\lambda(f)(g)\circ f_0(w)$ for any $ w\in \mathbb{D}, g\in \mathbb{Z}_{r_i}$, therefore, for given $g=\sigma=e^{\frac{2\pi \sqrt{-1}}{r_i}}$, we have(just consider the fiber component):$$
         e^{\frac{2\pi d_i\sqrt{-1}}{r_i}}w^{d_i}=(e^{\frac{2\pi \sqrt{-1}}{r_i}}w)^{d_i}=a(h_i)w^{d_i}=e^{2\pi (\iota_{(h_i)}^\mathcal{G}-\iota_{(h_i)}^\mathcal{Z})\sqrt{-1}}w^{d_i}.
$$
  Then we get:
$$
          \frac{d_i}{r_i}-(\iota_{(h_i)}^\mathcal{G}-\iota_{(h_i)}^\mathcal{Z})\in \mathbb{Z},
$$
i.e. $d_i\equiv r_i(\iota_{(h_i)}^\mathcal{G}-\iota_{(h_i)}^\mathcal{Z})\quad mod (r_i)$.
\end{proof}

From Proposition \ref{relation} and Proposition \ref{redim}, it is easy to get another natural formula without the notation [ ].

\begin{corollary}\label{re}
  If the moduli space is nonempty, then
\begin{equation}\label{redim-2}
 vdim\overline{\mathcal{M}}_{g,({\bf g}),A,({\bf h}),T_k}(\mathcal{G},\mathcal{Z})=c_1(A)+(3-n)(g-1)+m+k-\iota_{(\mathbf{g})}^\mathcal{G}-\iota_{(\mathbf{h})}^\mathcal{Z}-\sum_i\ell_i.
\end{equation}
\end{corollary}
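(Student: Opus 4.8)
The plan is to obtain (\ref{redim-2}) from the dimension formula (\ref{redim-1}) of Proposition \ref{redim} by trading the floor term $\sum_i[\ell_i]$ for the honest contact orders $\sum_i\ell_i$, the exchange being governed precisely by the numerical constraint of Proposition \ref{relation}. So the whole statement is really a bookkeeping consequence of the two results we already have.

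First I would translate Proposition \ref{relation} into a statement about fractional parts. On a nonempty moduli space we have, at each relative marked point, $d_i\equiv r_i(\iota_{(h_i)}^{\mathcal{G}}-\iota_{(h_i)}^{\mathcal{Z}})\ (\mathrm{mod}\ r_i)$, and since $\ell_i=d_i/r_i$ this says exactly that $\ell_i-(\iota_{(h_i)}^{\mathcal{G}}-\iota_{(h_i)}^{\mathcal{Z}})\in\Z$; equivalently, $\ell_i$ and $\iota_{(h_i)}^{\mathcal{G}}-\iota_{(h_i)}^{\mathcal{Z}}$ have the same fractional part.

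The second and only slightly delicate step is to pin down the \emph{range} of $\iota_{(h_i)}^{\mathcal{G}}-\iota_{(h_i)}^{\mathcal{Z}}$, not merely its class modulo $\Z$. Because $\mathcal{Z}$ is a symplectic divisor, its normal orbibundle $\mathcal{N}$ has complex rank $1$, and in the uniformizing chart around $f(x_i)$ the element $h_i$ acts on the fiber of $\mathcal{N}$ by a single scalar; by Lemma \ref{lemma} this is exactly the eigenvalue $a(h_i)$ that appears in the proof of Proposition \ref{relation}, namely $a(h_i)=e^{2\pi(\iota_{(h_i)}^{\mathcal{G}}-\iota_{(h_i)}^{\mathcal{Z}})\sqrt{-1}}$. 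Comparing with the definition of the degree shifting number — an eigen-direction with eigenvalue $e^{2\pi\sqrt{-1}\theta}$, $\theta\in[0,1)$, contributes exactly $\theta$ (and contributes $0$ when the eigenvalue is $1$) — one reads off that $\iota_{(h_i)}^{\mathcal{G}}-\iota_{(h_i)}^{\mathcal{Z}}\in[0,1)$. Together with the first step this forces $\iota_{(h_i)}^{\mathcal{G}}-\iota_{(h_i)}^{\mathcal{Z}}$ to be precisely the fractional part of $\ell_i$, so that $[\ell_i]=\ell_i-(\iota_{(h_i)}^{\mathcal{G}}-\iota_{(h_i)}^{\mathcal{Z}})$; in the borderline case $\ell_i\in\Z$ the congruence gives $\iota_{(h_i)}^{\mathcal{G}}-\iota_{(h_i)}^{\mathcal{Z}}\in\Z\cap[0,1)=\{0\}$, which is consistent with this identity.

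Finally I would substitute $\sum_i[\ell_i]=\sum_i\ell_i-\sum_i\bigl(\iota_{(h_i)}^{\mathcal{G}}-\iota_{(h_i)}^{\mathcal{Z}}\bigr)=\sum_i\ell_i-\iota_{(\mathbf{h})}^{\mathcal{G}}+\iota_{(\mathbf{h})}^{\mathcal{Z}}$ into (\ref{redim-1}); the two occurrences of $\iota_{(\mathbf{h})}^{\mathcal{G}}$ cancel and what remains is exactly (\ref{redim-2}). The main obstacle is the modest one located in the third paragraph: making sure that $\iota_{(h_i)}^{\mathcal{G}}-\iota_{(h_i)}^{\mathcal{Z}}$ genuinely lies in $[0,1)$ rather than merely in $\Q/\Z$, and checking that this is compatible with the convention for $[\,\cdot\,]$ even when $\ell_i$ happens to be an integer.
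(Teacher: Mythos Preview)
Your proposal is correct and follows essentially the same approach as the paper's proof: use Proposition \ref{relation} to conclude $[\ell_i]=\ell_i-(\iota_{(h_i)}^{\mathcal{G}}-\iota_{(h_i)}^{\mathcal{Z}})$, sum to obtain $\sum_i[\ell_i]=\sum_i\ell_i-(\iota_{(\mathbf{h})}^{\mathcal{G}}-\iota_{(\mathbf{h})}^{\mathcal{Z}})$, and substitute into (\ref{redim-1}). You are in fact more careful than the paper in explicitly justifying that $\iota_{(h_i)}^{\mathcal{G}}-\iota_{(h_i)}^{\mathcal{Z}}\in[0,1)$ via the rank-one normal bundle and Lemma \ref{lemma}, a point the paper's proof leaves implicit.
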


\begin{proof} If $d_i\equiv r_i(\iota_{(h_i)}^\mathcal{G}-\iota_{(h_i)}^\mathcal{Z})\quad mod (r_i)$,then we have:
 $$\Sigma[\ell_i]=\Sigma[d_i/r_i]=\Sigma\ell_i-\sum(\iota_{(h_i)}^\mathcal{G}-\iota_{(h_i)}^\mathcal{Z})=\Sigma\ell_i-(\iota_{(\mathbf{h})}^\mathcal{G}-\iota_{(\mathbf{h})}^\mathcal{Z})$$
 Plugging it into the formula (\ref{redim-1}) in Proposition \ref{redim}, we get the formula (\ref{redim-2}).
\end{proof}

 In this paper, We will only consider the case of weighted blow-up at a smooth point.  First of all, we need to fix some notations. Suppose that the weight $\mathbf{m}=(m_1, m_2, \ldots,m_n)$ . Let $\cG$ be a compact symplectic orbifold of dimension $2n$
 and  $P_0$ the blown-up point, We perform the $\mathbf{m}$-weighted symplectic cutting for $\mathcal{G}$ at $P_0$ as in Sect.2.4. We have
$$
        \overline{\mathcal{G}}^+=W\mathbb{P}(m_1,\ldots,m_n,1),\quad \overline{\mathcal{G}}^-=\widetilde{\mathcal{G}}.
$$
Note that the common divisor $\mathcal{Z}\cong W\mathbb{P}(m_1,\ldots,m_n)$ is the exceptional divisor in $\widetilde{\cG}$ and the infinity hyperplane in $W\mathbb{P}(m_1,\cdots,m_n,1)$ respectively.

  Since the first Chern class of weighted projective space plays an important role in the dimension counting, therefore, we need to compute the first Chern class of weighted projective space.
For this purpose, we need the Euler's sequence of weighted projective space as follows( see also Lemma 3.21 of \cite{M})

\begin{proposition}\label{eulerseq}
   (\cite{M}) Suppose $W\mathbb{P}(\mathbf{m})=W\mathbb{P}(m_1,\ldots,m_n)$. Then there exist an exact sequence given by
$$
      0\rightarrow\underline{\mathbb{C}}\xrightarrow[]{\varsigma}\bigoplus^n_{i=1}\mathcal{O}_\mathbf{m}(m_i)\rightarrow TW\mathbb{P}(\mathbf{m})\rightarrow0,
$$
where $\underline{\mathbb{C}}=W\mathbb{P}(\mathbf{m})\times\mathbb{C}$. The map $\varsigma$ is given by $\varsigma(1)=(m_1z_1,\ldots,m_nz_n)$.
\end{proposition}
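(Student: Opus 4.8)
The plan is to obtain the sequence by descending an elementary ``Euler'' sequence from affine space. Present $W\mathbb{P}(\mathbf{m})$ as the quotient orbifold of $\mathbb{C}^n\setminus\{0\}$ by the weighted action $\lambda\cdot(z_1,\dots,z_n)=(\lambda^{m_1}z_1,\dots,\lambda^{m_n}z_n)$ of $\mathbb{C}^*$, with quotient map $q\colon\mathbb{C}^n\setminus\{0\}\to W\mathbb{P}(\mathbf{m})$; this action is locally free (the stabilizer of a point with $z_i\neq0$ is $\mu_{m_i}$), and over each chart $V_i$ of Section \ref{wps} the map $q$ is, up to the finite group $\mu_{m_i}$, an honest submersion. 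In the groupoid language of \cite{ALR} an orbibundle on $W\mathbb{P}(\mathbf{m})$ is the same as a $\mathbb{C}^*$-equivariant bundle on $\mathbb{C}^n\setminus\{0\}$, and likewise for short exact sequences, so it is enough to construct the equivariant sequence upstairs.

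First I would work out everything upstairs. The holomorphic tangent bundle $T(\mathbb{C}^n\setminus\{0\})$ is trivial with frame $\partial_{z_1},\dots,\partial_{z_n}$, and a one-line computation gives $(\lambda\cdot(-))_*\partial_{z_i}=\lambda^{m_i}\partial_{z_i}$; equivalently, the $i$-th component of a $\mathbb{C}^*$-invariant vector field transforms with weight $m_i$, hence is a section of $\mathcal{O}_{\mathbf{m}}(m_i)$ (here one uses that the coordinate function $z_i$ is itself a section of $\mathcal{O}_{\mathbf{m}}(m_i)$, as follows by comparing with the $\mathbb{S}^{2n-1}\times_{\mathbb{S}^1}\mathbb{C}$ model of $\mathcal{O}_{\mathbf{m}}(k)$ recalled in Section \ref{wps}). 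Thus $T(\mathbb{C}^n\setminus\{0\})\cong\bigoplus_{i=1}^n q^*\mathcal{O}_{\mathbf{m}}(m_i)$ equivariantly. The fundamental vector field of the action is the weighted Euler field $E=\sum_{i=1}^n m_iz_i\,\partial_{z_i}$, which is $\mathbb{C}^*$-invariant and nowhere zero on $\mathbb{C}^n\setminus\{0\}$; so $1\mapsto E$ embeds $\underline{\mathbb{C}}=q^*\mathcal{O}_{\mathbf{m}}(0)$ as a subbundle whose image is the tangent line to the orbits, i.e.\ $\ker dq$, and a rank count ($n=1+(n-1)$) shows the quotient is $q^*TW\mathbb{P}(\mathbf{m})$. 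This yields the $\mathbb{C}^*$-equivariant short exact sequence
$$ 0\longrightarrow\underline{\mathbb{C}}\xrightarrow{\,(m_1z_1,\dots,m_nz_n)\,}\bigoplus_{i=1}^n q^*\mathcal{O}_{\mathbf{m}}(m_i)\longrightarrow q^*TW\mathbb{P}(\mathbf{m})\longrightarrow0, $$
which descends along $q$ to the sequence in the statement, with $\varsigma(1)=(m_1z_1,\dots,m_nz_n)$; when all $m_i=1$ this is the classical Euler sequence of $\mathbb{P}^{n-1}$.

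The steps that require real care are the two identifications: (i) that the equivariant bundle with frame $\{\partial_{z_i}\}$ descends to $\mathcal{O}_{\mathbf{m}}(m_i)$ \emph{as defined} in Section \ref{wps}, which is a matter of matching the $\mathbb{S}^1$- and $\mathbb{C}^*$-presentations of a single line orbibundle and pinning down the weight; and (ii) that $q$-descent sends an equivariant short exact sequence to a short exact sequence of orbibundles, which is routine once one notes that locally over $V_i$ the map $q$ factors as a submersion followed by the finite quotient by $\mu_{m_i}$, so exactness can be checked on the smooth part and is then automatically $\mu_{m_i}$-equivariant. If one prefers to avoid descent altogether, the fallback is to verify the sequence directly on the atlas $\{V_i\}$ using the transition maps $\phi_j\circ\phi_i^{-1}$ of Section \ref{wps} together with the transition cocycles of $\mathcal{O}_{\mathbf{m}}(m_i)$; this is elementary but heavier, and I would keep it in reserve.
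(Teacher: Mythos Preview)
Your argument is correct: the descent of the equivariant short exact sequence
\[
0\longrightarrow \underline{\mathbb{C}}\xrightarrow{\,E\,}T(\mathbb{C}^n\setminus\{0\})\longrightarrow q^*TW\mathbb{P}(\mathbf{m})\longrightarrow 0
\]
along the quotient map $q$, together with the weight computation $(\lambda\cdot)_*\partial_{z_i}=\lambda^{m_i}\partial_{z_i}$ identifying $T(\mathbb{C}^n\setminus\{0\})$ equivariantly with $\bigoplus_i q^*\mathcal{O}_{\mathbf{m}}(m_i)$, is the standard and cleanest route to the weighted Euler sequence. The two points you flag as needing care --- matching the $\mathbb{S}^1$/$\mathbb{C}^*$ presentations of $\mathcal{O}_{\mathbf{m}}(m_i)$ and checking that equivariant exactness descends --- are genuine but routine, and your sketch handles them adequately.

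There is nothing to compare against in the paper itself: the authors do not prove this proposition but simply import it from \cite{M} (it is their Lemma~3.21), using it only as input for the Chern class computation in Lemma~\ref{chernlemma}. Your proof is exactly the argument one expects to find behind that citation.
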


From this Euler's sequence, we have

\begin{lemma}\label{chernlemma}
Suppose that $E\in H_{2n-2}(W\mathbb{P}(m_1,\cdots,m_n,1))$ is the homology class represented by the divisor $\mathcal{Z}$ as above and $\eta_E$ is its Poincare dual. Then
\begin{equation}\label{chernclass}
     c_1(TW\mathbb{P}(m_1,\cdots,m_n,1)) = (\sum_{i=1}^nm_i +1)\eta_E.
\end{equation}
\end{lemma}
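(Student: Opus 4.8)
The plan is to compute $c_1(TW\mathbb{P}(m_1,\ldots,m_n,1))$ directly from the Euler sequence of Proposition~\ref{eulerseq}, applied to the weighted projective space $W\mathbb{P}(\mathbf{m}')$ with $\mathbf{m}'=(m_1,\ldots,m_n,1)$, and then to express the answer in terms of the divisor class $\eta_E$. First I would recall from Section~\ref{wps} that, since $H^2(W\mathbb{P}(\mathbf{m}'),\mathbb{Q})\cong\mathbb{Q}$, every line orbibundle is $\mathcal{O}_{\mathbf{m}'}(\chi)$ for some $\chi\in\mathbb{Q}$, with $c_1(\mathcal{O}_{\mathbf{m}'}(\chi)) = \chi/\mathrm{lcm}(m_1,\ldots,m_n,1) = \chi/\mathrm{lcm}(m_1,\ldots,m_n)$. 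Writing $L=\mathrm{lcm}(m_1,\ldots,m_n)$ and letting $\xi := c_1(\mathcal{O}_{\mathbf{m}'}(1)) = 1/L$ be the positive rational generator of $H^2$, additivity of the first Chern class in the exact sequence
$$
   0\rightarrow\underline{\mathbb{C}}\rightarrow\bigoplus_{i=1}^{n}\mathcal{O}_{\mathbf{m}'}(m_i)\ \oplus\ \mathcal{O}_{\mathbf{m}'}(1)\rightarrow TW\mathbb{P}(\mathbf{m}')\rightarrow0
$$
gives $c_1(TW\mathbb{P}(\mathbf{m}')) = \bigl(\sum_{i=1}^n m_i + 1\bigr)\,\xi$, since $c_1(\underline{\mathbb{C}})=0$.

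Next I would identify $\xi$ with $\eta_E$. The divisor $\mathcal{Z}\cong W\mathbb{P}(m_1,\ldots,m_n)$ sits inside $W\mathbb{P}(m_1,\ldots,m_n,1)$ as the infinity hyperplane (the locus where the last, weight-one coordinate vanishes). By Proposition~\ref{norbund}, its normal orbibundle is $\mathcal{O}_{\mathbf{m}}(1)$, and the associated line orbibundle $\mathcal{O}_{\mathbf{m}'}(\mathcal{Z})$ on $W\mathbb{P}(\mathbf{m}')$ whose Chern class is $\eta_E$ is precisely $\mathcal{O}_{\mathbf{m}'}(1)$ (the coordinate hyperplane $\{z_{n+1}=0\}$ of weight $1$ is the zero locus of the tautological section of $\mathcal{O}_{\mathbf{m}'}(1)$, just as in the classical $\mathbb{C}P^n$ case). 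Hence $\eta_E = c_1(\mathcal{O}_{\mathbf{m}'}(1)) = \xi$, and combining with the previous paragraph yields formula~(\ref{chernclass}).

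The step I expect to be the main obstacle is the careful justification that $\eta_E = c_1(\mathcal{O}_{\mathbf{m}'}(1))$, i.e. that the Poincaré dual of the weight-one coordinate hyperplane is exactly the generator $\mathcal{O}_{\mathbf{m}'}(1)$ rather than some rational multiple of it. In the weighted setting one must check that the weight-one section cuts out $\mathcal{Z}$ with multiplicity one (no stacky thickening along $\mathcal{Z}$, because the weight there is $1$), which is where Proposition~\ref{norbund} and the explicit description of $\mathcal{O}_{\mathbf{m}}(p/r)$ in Section~\ref{wps} are used; the purely additive Chern-class bookkeeping over $H^2(W\mathbb{P}(\mathbf{m}'),\mathbb{Q})\cong\mathbb{Q}$ is then routine. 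I would also remark that one can cross-check the coefficient $\sum m_i + 1$ against the unweighted case $\mathbf{m}=(1,\ldots,1)$, where it recovers the familiar $c_1(T\mathbb{C}P^n) = (n+1)\eta_E$.
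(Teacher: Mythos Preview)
Your argument is correct, but it follows a different route from the paper's. You apply the Euler sequence of Proposition~\ref{eulerseq} directly to the ambient space $W\mathbb{P}(\mathbf{m}')$, obtaining $c_1(TW\mathbb{P}(\mathbf{m}'))=(\sum m_i+1)\,c_1(\mathcal{O}_{\mathbf{m}'}(1))$, and then identify $c_1(\mathcal{O}_{\mathbf{m}'}(1))$ with $\eta_E$ via the divisor--line-bundle correspondence. The paper instead restricts to the hyperplane $\mathcal{Z}\cong W\mathbb{P}(\mathbf{m})$: it splits $s^*TW\mathbb{P}(\mathbf{m}')=TW\mathbb{P}(\mathbf{m})\oplus\mathcal{O}_{\mathbf{m}}(1)$, applies the Euler sequence to $W\mathbb{P}(\mathbf{m})$ to get $s^*c_1(TW\mathbb{P}(\mathbf{m}'))=(\sum m_i+1)\,e(\mathcal{O}_{\mathbf{m}}(1))$, uses the Thom-class identity $s^*\eta_E=e(\mathcal{O}_{\mathbf{m}}(1))$, and then must still prove that $s^*:H^2(W\mathbb{P}(\mathbf{m}'))\to H^2(W\mathbb{P}(\mathbf{m}))$ is injective via the long exact sequence of the pair and excision. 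Your approach is shorter and avoids the injectivity step entirely; the paper's approach trades that for the standard Thom/Euler/Poincar\'e-dual relation rather than invoking $\mathcal{O}(\mathcal{Z})\cong\mathcal{O}_{\mathbf{m}'}(1)$ in the orbifold setting. The point you flag as the main obstacle---that the weight-one coordinate cuts out $\mathcal{Z}$ with multiplicity one so that $\eta_E=c_1(\mathcal{O}_{\mathbf{m}'}(1))$ with no rational correction---is exactly the content the paper handles instead by the normal-bundle identification $\mathcal{N}_{\mathcal{Z}|W\mathbb{P}(\mathbf{m}')}=\mathcal{O}_{\mathbf{m}}(1)$ together with $s^*\eta_E=e(\mathcal{N})$.
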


\begin{proof}
 For simplicity, denote $W\mathbb{P}(\mathbf{m}):=W\mathbb{P}(m_1,\ldots,m_n)$, $W\mathbb{P}(\mathbf{m}'):=W\mathbb{P}(m_1,\ldots,m_n,1)$.
 Consider the orbifold embedding induced by $\mathcal{Z}$:
 \begin{align*}
   s:\quad W\mathbb{P}(\mathbf{m})\quad\quad &\longrightarrow  W\mathbb{P}(\mathbf{m}')\\
     [(z_1,\ldots, z_n)] & \longmapsto [(z_1,\ldots,z_n,0)]
 \end{align*}
 which induce a map of cohomology groups:
$$
   s^*:\quad H^*(W\mathbb{P}(\mathbf{m}'))\quad\longrightarrow \quad H^*(W\mathbb{P}(\mathbf{m})),
$$
 and we have:
$$
   s^{-1}(TW\mathbb{P}(\mathbf{m}'))=TW\mathbb{P}(\mathbf{m}')|_{\mathcal{Z}}=TW\mathbb{P}(\mathbf{m})\bigoplus \mathcal{N},
$$
 where $\mathcal{N}=\mathcal{N}_{\mathcal{Z}|W\mathbb{P}(\mathbf{m}')}=\mathcal{O}_\mathbf{m}(1)$ (cf. Proposition \ref{norbund}). So from Proposition \ref{eulerseq} and Whitney sum formula, we have:
\begin{equation}\label{E1}
       s^*c_1(TW\mathbb{P}(\mathbf{m}'))=\sum^n_{i=1}c_1(\mathcal{O}_\mathbf{m}(m_i))+c_1(\mathcal{O}_\mathbf{m}(1))=(\sum^n_{i=1}m_i+1)e(\mathcal{O}_\mathbf{m}(1)).
\end{equation}
 Denote $\Phi(\mathcal{N})$ is the Thom class of orbi-bundle $\mathcal{N}$, by the well-known relation among Euler class, Thom class and Poincare dual, we have:
\begin{equation}\label{E2}
      s^*(\eta_E)=s^*(\Phi(\mathcal{N}))=s^*(\Phi(\mathcal{O}_\mathbf{m}(1)))=e(\mathcal{O}_\mathbf{m}(1)).
\end{equation}
Combining (\ref{E1}) and (\ref{E2}) we get:
\begin{equation}\label{E3}
           c_1(TW\mathbb{P}(\mathbf{m}'))-(\sum^n_{i=1}m_i+1)\eta_E\in Ker(s^*).
\end{equation}
 Now we consider the long exact sequence:
$$
     \cdots\rightarrow H^2(W\mathbb{P}(\mathbf{m}'),W\mathbb{P}(\mathbf{m}))\rightarrow H^2(W\mathbb{P}(\mathbf{m}'))
  \xrightarrow[]{s^*} H^2(W\mathbb{P}(\mathbf{m}))\rightarrow H^3(W\mathbb{P}(\mathbf{m}'),W\mathbb{P}(\mathbf{m}))\rightarrow\cdots
$$
Note that:
\begin{align*}
  H^2(W\mathbb{P}(\mathbf{m}'),W\mathbb{P}(\mathbf{m}))&= H^2(W\mathbb{P}(\mathbf{m}'),U)\xlongequal[]{exision}
  H^2(W\mathbb{P}(\mathbf{m}')-\mathcal{Z},U-\mathcal{Z})\\
  &= H^2(\mathbb{C}^n,\mathbb{C}^n-\{0\})=0,
\end{align*}
 where $U$ is a small neighborhood near $\mathcal{Z}$ which is homotopic to $\mathcal{Z}$.

So $s^*: H^2(W\mathbb{P}(\mathbf{m}'))\longrightarrow H^2(W\mathbb{P}(\mathbf{m}))$ is an injective. (In fact, it is an isomorphism.) This fact together with (\ref{E3}) imply that:
$$
        c_1(TW\mathbb{P}(\mathbf{m}'))=(\sum^n_{i=1}m_i+1)c_1(\eta_E).
$$
\end{proof}

Next we will follow \cite{Hu} to decompose the proof of our main theorems into two comparison theorems between absolute and relative Gromov-Witten invariants. The first coming theorem is
\begin{theorem}\label{comthm1}
    Under the assumption of Theorem \ref{thm1-1}. If $g\leq 1$, $n\geq 2$, then
\begin{equation}\label{comp1}
       \Psi^{\mathcal{G}}_{(A,g,m,(\mathbf{g}))}(\alpha_1,\ldots,\alpha_m)=\Psi^{(\overline{\mathcal{G}}^-,\mathcal{Z})}_{(A^-,g,m,(\mathbf{g}))}(\alpha_1,\ldots,\alpha_m)
\end{equation}
\end{theorem}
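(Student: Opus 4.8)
The plan is to set up a correspondence between stable orbifold maps into $\cG$ and stable \emph{relative} orbifold maps into the pair $(\overline{\cG}^-,\mathcal{Z}) = (\widetilde{\cG},\mathcal{Z})$ that have \emph{no} relative marked points and \emph{no} rubber components, and to show that the degeneration formula together with dimension counting forces only this type of configuration to contribute. First I would apply the degeneration formula (Remark~\ref{remark}(2)), which gives
$$
\Psi^{\mathcal{G}}_{(A,g,m,(\mathbf{g}))} = \sum_\Gamma \Psi_\Gamma,
$$
the sum running over admissible splitting types $\Gamma = \{A^+,g^+,m^+,(\mathbf g^+),(\mathbf h^+),T_k^+; A^-,g^-,m^-,(\mathbf g^-),(\mathbf h^-),T_k^-\}$ with $A^+$ supported on $\overline{\cG}^+ = W\mathbb{P}(\mathbf m,1)$ and $A^-$ on $\widetilde{\cG}$. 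The goal is to prove that every term vanishes except the one with $A^+=0$, $k=0$ (no relative marked points), $g^+=0$, $m^+=0$, for which $\Psi_\Gamma$ reduces to exactly $\Psi^{(\overline{\cG}^-,\mathcal{Z})}_{(A^-,g,m,(\mathbf g))}(\alpha_1,\dots,\alpha_m)$ with $A^- = A^-$ the class with $\pi_*$-image matching $A$ (and since $p^![A]$ vs $A$: the exceptional-divisor-degree is forced to be zero here, one checks $A^- = p^!(A)$ is not needed yet — that matching is the content of the \emph{second} comparison theorem).

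The key computation is the dimension count. For a $\Gamma$ to contribute, by the Remark following the degeneration theorem we need
\begin{equation*}
vdim\overline{\mathcal M}_{\Gamma^+} + vdim\overline{\mathcal M}_{\Gamma^-} = \sum_{i=1}^k\bigl(n-1-\iota^{\mathcal Z}_{(h_i)}-\iota^{\mathcal Z}_{(h_i^{-1})}\bigr) + c_1(A) + (n-3)(1-g) + m - \iota^{\cG}_{(\mathbf g)}.
\end{equation*}
Now I would use Corollary~\ref{re} to write each $vdim\overline{\mathcal M}_{\Gamma^\pm}$ without the bracket $[\,\cdot\,]$, and crucially apply Lemma~\ref{chernlemma}: on $\overline{\cG}^+ = W\mathbb{P}(m_1,\dots,m_n,1)$ we have $c_1 = (\sum m_i + 1)\eta_E$, so for a nonzero effective class $A^+$ supported there (and effective classes pair nonnegatively, with $A^+ \cdot \mathcal{Z}$ controlled by the contact orders $T_k^+$) the contribution $c_1(A^+)$ is large, forcing $vdim\overline{\mathcal M}_{\Gamma^+}$ to be strictly larger than what can be balanced against the constraints coming from the relative conditions on the $\widetilde{\cG}$-side. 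Because we are integrating only classes $p^*\alpha_i$ pulled back from $\cG$ (which restrict trivially to the twisted sectors of the exceptional divisor in the relevant way and carry no insertions on the $\overline{\cG}^+$ factor), the invariant $\Psi^{(\overline{\cG}^+,\mathcal{Z})}$ must have $m^+=0$; then for genus $g\le 1$ and $n\ge 2$ the dimension of $\overline{\mathcal M}_{\Gamma^+}$ for any nontrivial $(A^+,k,g^+)$ is too big — the integrand is a point class of the wrong degree — so $\Psi_{\Gamma^+}=0$ unless $A^+=0$ and $k=0$. The hypotheses $g\le 1$, $n\ge 2$ are exactly what makes the inequality $c_1(A^+) + (n-3)(1-g^+) + \dots > (\text{target dimension})$ strict; this is the heart of the argument and mirrors the manifold computation in \cite{Hu}.

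Once $k=0$ is forced, $(\mathbf h^\pm)$ and $T_k^\pm$ are empty, so the balance/gluing conditions become vacuous, $g^+ = 0$, $A^+ = 0$, $m^+ = 0$ give $g^- = g$, $A^- $ the (unique, by no vanishing cycle) lift, $m^- = m$, $(\mathbf g^-) = (\mathbf g)$, and the constant $C(\Gamma,I)$ collapses to $1$; the $\overline{\cG}^+$-factor $\Psi^{(\overline{\cG}^+,\mathcal{Z})}_{(0,0,\varnothing,\varnothing,\varnothing)}(\,\cdot\,)$ is the trivial invariant equal to $1$. Therefore the whole sum degenerates to the single term
$$
\Psi^{(\overline{\cG}^-,\mathcal{Z})}_{(A^-,g,m,(\mathbf g))}(\alpha_1,\dots,\alpha_m),
$$
which is \eqref{comp1}. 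The main obstacle I anticipate is the careful bookkeeping in the dimension inequality: one must handle \emph{all} effective decompositions $A^+ + A^- = \pi_*[A]$ at once, control the pairing $A^+\cdot\mathcal Z = \sum_i \ell_i$ against $c_1(A^+)$ via Lemma~\ref{chernlemma}, and verify that the degree-shifting contributions $\iota^{\mathcal Z}_{(h_i)}$ from twisted relative points on the weighted-projective side cannot conspire to restore the dimension balance — this last point is precisely where the orbifold case differs from \cite{Hu} and where Proposition~\ref{relation} and Corollary~\ref{re} (the bracket-free dimension formula) are indispensable.
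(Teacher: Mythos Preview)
Your proposal is correct and follows essentially the same route as the paper: apply the degeneration formula, force $m^+=0$ by supporting the $\alpha_i$ away from the blown-up point, then use Lemma~\ref{chernlemma} together with Corollary~\ref{re} to run a dimension count that kills every $\Gamma$ with $k>0$. The only cosmetic difference is that the paper phrases the vanishing on the $\Gamma^-$ side rather than the $\Gamma^+$ side --- it proves (Lemma~\ref{noneq}) that $\tfrac12\sum\deg\alpha_i > vdim\,\overline{\mathcal M}_{\Gamma^-}$, which by~(\ref{dim-sum}) is equivalent to your ``$vdim\,\overline{\mathcal M}_{\Gamma^+}$ is too big'' statement; the explicit chart-by-chart verification of the key inequality $(\sum_i m_i)\ell_j+\iota^{\mathcal Z}_{(h_j^{-1})}-(n-1)>0$ is exactly the degree-shifting bookkeeping you flag as the anticipated obstacle.
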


\begin{proof}
 We perform the ${\bf m}$-weighted symplectic cutting for $\cG$ at $P_0$. Then we get
$$
    \overline{\cG}^+= W\mathbb{P}(m_1,\cdots, m_n,1), \quad \overline{\cG}^- = \widetilde{\cG}.
$$
 Now we want to apply the degeneration formula to compute the absolute GW invariants of $\cG$ in the LHS of (\ref{comp1}). From Remark \ref{remark} (2) and the degeneration formula,
 we only need to consider the contribution of each component $\Gamma=\{\Gamma^+,\Gamma^-\}$ to the GW-invariants.
 According to our convention, $u^\pm:\mathcal{C}^\pm\longrightarrow\mathcal{G}^\pm$ may have many connected components $u^\pm_i:\mathcal{C}^\pm_i\longrightarrow\mathcal{G}^\pm,i=1,\ldots,l^\pm$.
 Suppose $\mathcal{C}^\pm_i$ has genus $g^\pm_i$, $g^\pm=\sum g^\pm_i$ with $m^\pm_i$ marked points. Note that
 $\overline{\mathcal{G}}^+=W\mathbb{P}(m_1,\ldots,m_n,1)$. For the index $\Gamma^+$, from (\ref{redim-2}), we have
 \begin{equation}\label{ind++}
   vdim\overline{\mathcal M}_{\Gamma^+} =\sum c_1([u^+_i])+(3-n)(g^+-l^+)+m^++k-\iota_{(\mathbf{g}^+)}^\mathcal{G}-\iota_{(\mathbf{h})}^\mathcal{Z}-\sum \ell_i,
 \end{equation}
where the last summation runs over all fractional contact orders in $\Gamma^+$.

 Since $\mathcal{Z}\cong W\mathbb{P}(m_1,\ldots,m_n)$ in $W\mathbb{P}(m_1,\ldots,m_n,1))$,
  an intersection multiplicity calculation shows $\sum([u^+_i]\cdot \eta_E)=\sum\ell_i$.

Therefore, from (\ref{chernclass}), we have
\begin{equation}\label{chernnumber-1}
     \sum c_1([u^+_i])=(\sum^n_{i=1}m_i+1)(\sum^k_i\ell_i).
\end{equation}

Combining (\ref{ind++}) and (\ref{chernnumber-1}), we have
\begin{equation}\label{ind+}
 vdim\overline{\mathcal M}_{\Gamma^+} =(\sum^n_{i=1}m_i)(\sum^k_{j=1} \ell_j)+(3-n)(g^+-l^+)+m^++k-\iota_{(\mathbf{g}^+)}^\mathcal{G}-\iota_{(\mathbf{h})}^\mathcal{Z}.
\end{equation}

Since $\alpha_i\in H^*(\mathcal{G})$ and the blown-up point $P_0$ is a smooth point, we may assume that all $\alpha_i$ support away from the neighborhood of $P_0$.
So we have $\alpha_i^+=0$, $1\leq i\leq m$. Therefore, if $m^+>0$, we have for any $\beta_b\in H^*(\mathcal{Z})$,
$$
        \Psi^{(\overline{\mathcal{G}}^+,\mathcal{Z})}_{(A^+,g^+,m^+,(\mathbf{g}^+),\{\ell_1,\ldots,\ell_k\},(\mathbf{h}))}(\alpha^+_i,\beta_b)=0.
$$
This implies $\Psi_\Gamma=0$ except $m^-=m$.

Now we assume $m^-=m$, i.e. $m^+=0$. From (\ref{dim-sum}) and (\ref{ind+}), we get:
\begin{eqnarray}
     vdim \overline{\mathcal M}_{\Gamma^-} = & & (n-2)k-\sum^k_{i=1}\iota_{(h_i^{-1})}^\mathcal{Z}+c_1(A)+(3-n)(g-g^++l^+-1) \nonumber  \\
     & & +\,\, m-\iota_{(\mathbf{g})}^\mathcal{G}-(\sum^n_{i=1}m_i)(\sum^k_{j=1} \ell_j),  \label{ind-}
\end{eqnarray}
where $k$ is the number of relative marked points on $\mathcal{C}^+$ or $\mathcal{C}^-$.

On the other hand, if
$$
      \frac{1}{2}\sum \deg\alpha_i\neq c_1(A)+(3-n)(g-1)+m-\iota_{(\mathbf{g})}^\mathcal{G},
$$
by the definition of the orbifold Gromov-Witten invariant, we have
$$
      \Psi^\mathcal{G}_{(A,g,m,(\mathbf{g}))}(\alpha_1,\ldots,\alpha_m)=\Psi^{\widetilde{\mathcal{G}}}_{(p!(A),g,m,(\mathbf{g}))}(p^*\alpha_1,\ldots,p^*\alpha_m)=0.
$$
Then the theorem holds trivially. Therefore, we may also assume
\begin{equation}\label{deg}
  \frac{1}{2}\sum \deg\alpha_i=c_1(A)+(3-n)(g-1)+m- \iota_{(\mathbf{g})}^\mathcal{G}.
\end{equation}
(\ref{ind-}) and (\ref{deg}) imply that:
\begin{equation}\label{deg-ind}
  \frac{1}{2}\sum deg\alpha_i- vdim\overline{\mathcal M}_{\Gamma^-} =(\sum^n_{i=1}m_i)(\sum^k_{j=1} \ell_j)+(3-n)(g^+-l^+)+\sum^k_{j=1}\iota_{(h_j^{-1})}^\mathcal{Z}-(n-2)k.
\end{equation}

Next, we first prove the following lemma:

\begin{lemma}\label{noneq}
  If $l^+\geq 1$, $k>0$, $g\leq 1$, $n\geq 2$, then
$$
          \frac{1}{2}\sum deg\alpha_i- vdim\overline{\mathcal M}_{\Gamma^-}>0.
$$
\end{lemma}

\begin{proof}
  Since $k\geq l^+\geq 1$, $g^+\leq g\leq 1$, $n\geq 2$, it is easy to see that
$$
         (3-n)(g^+-l^+)+k\geq 0.
$$
From (\ref{deg-ind}), it suffices to prove that
$$
      (\sum^n_{i=1}m_i)(\sum^k_{j=1} \ell_j)+\sum^k_{j=1}\iota_{(h_j^{-1})}^\mathcal{Z}-(n-1)k>0.
$$

   In fact, we will prove that, for $1\leq j\leq k$
\begin{equation}\label{neq}
     (\sum^n_{i=1}m_i)\ell_j+\iota_{(h_j^{-1})}^\mathcal{Z}-(n-1)>0.
\end{equation}
The proof of (\ref{neq}) is nothing but direct checking. For simplicity, we drop the index $j$. The orbifold J-holomorphic map
 $u^+:\mathcal{C}\longrightarrow \overline{\mathcal{G}}^+\cong W\mathbb{P}(m_1,\ldots,m_n,1)$ maps the relative marked point $x$
 to $(p,(h))\in\bigwedge\mathcal{Z}$ with fractional contact order $\ell$, where $\bigwedge\mathcal{Z}$ is the inertia orbifold of $\mathcal{Z}$.
 Then $\ell=\frac{d}{r}$ , where $d$ is the contact order of the lifting map and $r$ is the order of $h$.

Suppose $p=[(z_1,\ldots,z_n,w)]\in W\mathbb{P}(m_1,\ldots,m_n,1)$, since $p\in\mathcal{Z}$, there is some $i$ s.t. $z_i\neq 0$.
Without loss of generality, we assume $z_1\neq 0$, then $p$ is on the standard orbifold chart $U_1=\{[(z_1,\ldots,z_n,w)]|z_1\neq 0\}$ of $W\mathbb{P}(m_1,\ldots,m_n,1)$:
(c.f. Sect. 2.3)
\begin{align*}
  \varphi: & \qquad \quad U_1\qquad \longrightarrow  \qquad\mathbb{C}^n/\mathbb{Z}_{m_1}\\
           &[(z_1,\ldots,z_n,w)]  \longmapsto   (\frac{z_2}{z_1^{\frac{m_2}{m_1}}},\ldots,\frac{z_n}{z_1^{\frac{m_n}{m_1}}},\frac{w}{z_1^{\frac{1}{m_1}}})_{m_1},
\end{align*}
where $\xi\in\mathbb{Z}_{m_1}$ acts on $\mathbb{C}^n$ as: $\xi\cdot(y_1,\ldots, y_n)=(\xi^{m_2}y_1,\ldots,\xi^{m_n}y_{n-1},\xi y_n)$.
Now we compute the degree shifting number $\iota_{(h^{-1})}^\mathcal{Z}$. Suppose the map $\lambda_{u^+}:G_{x}\rightarrow \mathbb{Z}_{m_1}$ which is induced by $u^+$
maps the generator of $G_{x}$ to $\xi=e^{\frac{2l\pi \imath}{m_1}}\in\mathbb{Z}_{m_1}, \quad(0\leq l\leq m_1-1)$. Then
$$\xi^{-1}\cdot(y_1,\ldots, y_n)=(e^{-\frac{2m_2l\pi \imath}{m_1}}y_1,\ldots,e^{-\frac{2m_nl\pi \imath}{m_1}}y_{n-1},e^{-\frac{2l\pi \imath}{m_1}} y_n)$$
And the orbifold chart of $\mathcal{Z}$ is just the projection of the first n-1 components of $\mathbb{C}^n$. By the definition of degree shifting number, we have:
\begin{align}\label{eq1}
  \iota_{(h^{-1})}^\mathcal{Z}=\sum_{i=2}^n\frac{l_im_1-lm_i}{m_1}
\end{align}
where $l_i$ is minimal integer s.t. $\frac{l_im_1-lm_i}{m_1}\geq 0$, $l_i\geq 0$.

On the other hand, it is easily to see that $ \iota_{(h)}^\mathcal{G}-\iota_{(h)}^\mathcal{Z}=\frac{l}{m_1}$. Recall that in the proof of
corollary \ref{re}, we have
$$q:=\ell-(\iota_{(h)}^\mathcal{G}-\iota_{(h)}^\mathcal{Z})=[\ell]\in\mathbb{N}$$
So
\begin{align}\label{eq2}
  \ell=q+(\iota_{(h)}^\mathcal{G}-\iota_{(h)}^\mathcal{Z})=\frac{qm_1+l}{m_1}
\end{align}
Put (\ref{eq1}) and (\ref{eq2}) into the left-handside of (\ref{neq}), we denote $l_1:=l$ and get
\begin{eqnarray*}
 (\sum^n_{i=1}m_i)\ell+\iota_{(h^{-1})}^\mathcal{Z}-(n-1)&=&(\sum^n_{i=1}m_i)\frac{qm_1+l}{m_1}+\sum_{i=2}^n\frac{l_im_1-lm_i}{m_1}-(n-1)\\
                                                         &=&(qm_1+l)+\sum_{i=2}^n\frac{qm_1m_i+lm_i+l_im_1-lm_i}{m_1}-(n-1)\\
                                                         &=&(\sum^n_{i=1}m_i)q+\sum_{i=1}^nl_i-(n-1)\\
                                                         &=&\sum^n_{i=1}(m_iq+l_i-1)+1.
\end{eqnarray*}
Note that $l_i,q\in\mathbb{N}$, and :\\
(1)If $q\geq 1$, then it is easily see that $m_iq+l_i-1 \geq 0$, (\ref{neq}) holds;\\
(2)If $q=0,l\geq 1$, then from (\ref{eq1}), we have $l_i\geq 1,\quad \forall i$, then $m_iq+l_i-1 \geq 0$, (\ref{neq}) holds;\\
(3)If $q=0,l=0$, then from (\ref{eq1}), we have $l_i=0,\, \forall i$, then the relative mark point $x$ is a smooth point of contact order $0$,
   contradicting to the definition of relative mark point.

  Summarizing, we complete the proof of the lemma.
\end{proof}

Now we come back to the proof of the comparison theorem.  If $k>0$, by the definition of relative Gromov-Witten invariant and Lemma \ref{noneq},
we have for any $\beta_b\in H^*_{CR}(\mathcal{Z},\mathbb{C})$,
$$
         \Psi^{(\overline{\mathcal{G}}^-,\mathcal{Z})}_{(A^-,g^-,m,(\mathbf{g}),\{\ell_1,\ldots,\ell_k\},(\mathbf{h}))}(\alpha^-_i,\beta_b)=0.
$$
Therefore, $\Psi_\Gamma=0$ except for $\Gamma=\{A^-,g,m,(\mathbf{g})\}$. This completes the proof of the comparison theorem.
\end{proof}

\begin{remark}\label{remark2}
    In fact, Lemma \ref{noneq} still holds when $g\geq 2$, $n$=2 or 3. This fact can be easily seen in the proof.
\end{remark}

Next,  we consider the case of $\widetilde{\mathcal{G}}$ and prove our second comparison theorem.

\begin{theorem}\label{comthm2}
  Under the assumption of Theorem \ref{thm1-1}. If $g\leq 1$, $n\geq 2$, then
\begin{align}\label{comp2}
       \Psi^{\widetilde{\mathcal{G}}}_{(p^!(A),g,m,(\mathbf{g}))}(p^*(\alpha_1),\ldots,p^*(\alpha_m))=\Psi^{(\overline{\mathcal{G}}^-,\mathcal{Z})}_{(p^!(A)^-,g,m,(\mathbf{g}))}(\alpha_1,\ldots,\alpha_m)
\end{align}
\end{theorem}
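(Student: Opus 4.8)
The plan is to reduce the left-hand side of (\ref{comp2}) to a relative invariant of $(\overline{\mathcal{G}}^-,\mathcal{Z})=(\widetilde{\mathcal{G}},\mathcal{Z})$ by a degeneration of $\widetilde{\mathcal{G}}$ parallel to the one used in Theorem \ref{comthm1}, but performed along the exceptional divisor $\mathcal{Z}$ instead of at the smooth point $P_0$. A tubular neighbourhood of $\mathcal{Z}$ in $\widetilde{\mathcal{G}}=\overline{\mathcal{G}}^-$ is a neighbourhood of the zero section of $\mathcal{O}_\mathbf{m}(-1)$, which carries the fibrewise circle action (compatible, via the blow-down map $p$, with the weighted action on $\mathbb{C}^n$); performing the symplectic cutting of $\widetilde{\mathcal{G}}$ for this action degenerates $\widetilde{\mathcal{G}}$ to $\mathcal{P}\wedge_{\mathcal{Z}}\widetilde{\mathcal{G}}$, where $\mathcal{P}=\mathbb{P}(\mathcal{O}_\mathbf{m}(-1)\oplus\mathbb{C})$ is a ruled orbifold over $W\mathbb{P}(\mathbf{m})$ and the cut divisor is a parallel copy of $\mathcal{Z}$, with normal orbibundle $\mathcal{O}_\mathbf{m}(-1)$ inside $\widetilde{\mathcal{G}}$ and $\mathcal{O}_\mathbf{m}(1)$ inside $\mathcal{P}$ (so the gluing is legitimate, cf. Remark 2.13(2)). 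By the degeneration formula together with the absence of vanishing cycles (Remark \ref{remark}(2)),
$$
\Psi^{\widetilde{\mathcal{G}}}_{(p^!(A),g,m,(\mathbf{g}))}(p^*\alpha_1,\dots,p^*\alpha_m)=\sum_\Gamma\Psi_\Gamma,
$$
the sum over splitting types $\Gamma=\{\Gamma^+,\Gamma^-\}$, where $\Gamma^-$ records the components mapped to $\widetilde{\mathcal{G}}$ (relative to $\mathcal{Z}$) and $\Gamma^+$ those mapped to $\mathcal{P}$. Thus it suffices to prove that $\Psi_\Gamma=0$ for every $\Gamma$ except the one with no component, no marked point on the $\mathcal{P}$-side and no relative marked point, whose contribution is precisely the right-hand side of (\ref{comp2}), with $p^!(A)^-=p^!(A)$ since no class splits off.

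As in the proof of Theorem \ref{comthm1}, I would first dispose of two trivial cases. Since $p$ is an isomorphism away from $\mathcal{Z}$ and $P_0$ and $n\ge 2$, one has $\deg p^*\alpha_i=\deg\alpha_i$, $c_1(p^!(A))=c_1(A)$, and the relevant twisted sectors (hence the degree-shifting numbers) are unchanged; therefore the virtual dimensions of the two moduli spaces in (\ref{comp2}) agree, and both invariants vanish unless $\frac{1}{2}\sum\deg\alpha_i$ equals this common dimension, which we may assume. Moreover, the $\alpha_i$ may be taken supported away from $P_0$, so that $p^*\alpha_i$ is supported away from $\mathcal{Z}$ and hence restricts to zero on $\mathcal{P}$; consequently $\Psi_\Gamma=0$ whenever there is an absolute marked point on the $\mathcal{P}$-side, and we are reduced to the terms with all $m$ absolute marked points on $\widetilde{\mathcal{G}}$.

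The crux is then the analogue of Lemma \ref{noneq}: if the $\mathcal{P}$-side carries a nontrivial component (so $l^+\ge 1$ and there is at least one relative marked point, $k\ge 1$), then $\frac{1}{2}\sum\deg\alpha_i-vdim\,\overline{\mathcal M}_{\Gamma^-}>0$, which by (\ref{dim-sum}) forces $\Psi_\Gamma=0$; a $\mathcal{P}$-side component with $k=0$ is a closed curve in $\mathcal{P}$ without marked points, and its (absolute) contribution again vanishes by the virtual dimension count, exactly as in Theorem \ref{comthm1}. To establish the strict inequality I would compute $c_1(T\mathcal{P})$ from the weighted Euler sequence of $W\mathbb{P}(\mathbf{m})$ (Proposition \ref{eulerseq}) together with the $\mathbb{P}^1$-bundle structure, in the spirit of Lemma \ref{chernlemma}, expressing it in terms of the two section classes of $\mathcal{P}$; then, using that each $\mathcal{P}$-component $u^+_i$ meets the glued divisor with total multiplicity $\sum\ell_i$, one substitutes for $\sum c_1([u^+_i])$ and runs the same case analysis in the integers $q=[\ell]$ and $l$ as in the proof of Lemma \ref{noneq}, with the degree-shifting numbers $\iota^\mathcal{Z}_{(h^{-1}_j)}$ entering through the local chart $\mathbb{C}^n/\Z_{m_i}$ of $W\mathbb{P}(\mathbf{m})$. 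Together with Corollary \ref{re}, this shows that only $\Gamma=\{p^!(A)^-,g,m,(\mathbf{g})\}$ survives and the degeneration formula collapses to (\ref{comp2}).

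I expect this last dimension count to be the main obstacle. In contrast to $W\mathbb{P}(m_1,\dots,m_n,1)$, whose first Chern class is a positive multiple of a single divisor class (Lemma \ref{chernlemma}), the bubble $\mathcal{P}$ has $b_2=2$ and one of its two natural sections has negative normal orbibundle $\mathcal{O}_\mathbf{m}(-1)$, so one has to split $[u^+_i]$ into its fibre and base parts and extract enough positivity from the contact constraint with the glued (positive) section; this is where $n\ge 2$ and $g\le 1$ are used, and — as noted in Remark \ref{remark2} for the companion Lemma \ref{noneq} — the argument should persist for arbitrary genus when $n=2,3$, which is what yields Theorem \ref{thm1-2}.
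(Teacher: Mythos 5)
Your setup coincides with the paper's: the same degeneration of $\widetilde{\mathcal{G}}$ along the exceptional divisor into $\mathbb{P}(\mathcal{O}_{\mathbf{m}}(-1)\oplus\mathcal{O})\wedge_{\mathcal{Z}}\widetilde{\mathcal{G}}$, the same support argument killing all $\Gamma$ with absolute insertions on the bubble, and the same plan of reducing everything to a dimension-count inequality of the type of Lemma \ref{noneq}. However, the step you yourself flag as ``the main obstacle'' is exactly where the proof lives, and your sketch for it has a genuine gap. What is needed is the identity $\sum_i c_1([u^+_i])=(\sum_{i=1}^n m_i+1)\sum_j\ell_j$, i.e.\ the same Chern number as in Theorem \ref{comthm1}, so that Lemma \ref{noneq} applies verbatim. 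The paper gets this from two facts: (i) $c_1(T\mathcal{P})=(\sum m_i+1)\eta_H-(\sum m_i-1)\eta_E$, proved in Lemma \ref{chernlemma-1} after identifying $\mathcal{P}\cong\widetilde{W\mathbb{P}}(m_1,\ldots,m_n,1)$ by commuting the two symplectic cuts (your Euler-sequence route to $c_1(T\mathcal{P})$ would likely also produce this); and (ii) --- the decisive point you never state --- the total $+$-side class pairs to \emph{zero} with the exceptional divisor: $p^!(A)^+\cdot E=p^!(A)\cdot E=A\cdot p_*[E]=0$, because $p^!(A)=PD\,p^*PD(A)$ and $p$ contracts $E$ to the smooth point $P_0$, while the $-$-side classes are disjoint from $E$; together with $p^!(A)^+\cdot H=\sum_j\ell_j$ this kills the negative $\eta_E$-term.

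Your proposed substitute --- splitting each $[u^+_i]$ into fibre and base parts and ``extracting enough positivity from the contact constraint with the glued section'' --- does not work componentwise: a component of $u^+$ meeting $E$ with positive multiplicity has its Chern number \emph{decreased} by the term $-(\sum m_i-1)\,\eta_E\cdot[u^+_i]$, while a component contained in $E$ contributes with the opposite sign, so no per-component positivity statement of the kind used in Lemma \ref{noneq} is available. It is only the global cancellation $\sum_i\eta_E\cdot[u^+_i]=0$, coming from the definition of $p^!(A)$, that restores the estimate and reduces (\ref{comp2}) to the computation already done for $W\mathbb{P}(m_1,\ldots,m_n,1)$. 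Since this vanishing is neither stated nor proved in your proposal, the key inequality $\frac{1}{2}\sum\deg\alpha_i-vdim\,\overline{\mathcal{M}}_{\Gamma^-}>0$ is left unestablished, and with it the vanishing of all nontrivial $\Psi_\Gamma$.
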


\begin{proof}
  We perform the orbifold symplectic cutting with trivial weight along the exceptional divisor $E$. We have
$$
       \overline{\widetilde{\mathcal{G}}}^+=\mathbb{P}(\mathcal{O}_{(m_1,\ldots,m_n)}(-1)\oplus\mathcal{O}),\quad \overline{\widetilde{\mathcal{G}}}^-=\widetilde{\mathcal{G}}.
$$
   Now we apply the gluing theorem to compute the contribution of each gluing component. In fact, we will prove that the contribution of relative stable J-holomorphic curves in
   $\widetilde{\mathcal{G}}$ which touch the exceptional divisor $E$ to the GW-invariant of $\widetilde{\mathcal{G}}$ is zero. We consider the component
$$
       \Gamma=\{p^!(A)^+,g^+,m^+,(\mathbf{g}^+),\{\ell_1,\ldots,\ell_k\},(\mathbf{h}^+);p^!(A)^-,g^-,m^-,(\mathbf{g}^-),\{\ell_1,\ldots,\ell_k\},(\mathbf{h^-})\},
$$
with $({\bf h}^+) = (({\bf h}^-)^{-1})$.

Denote $\Gamma^\pm =\{p^!(A)^\pm, g^\pm,m^\pm,({\bf g}^\pm), \{\ell_1,\cdots,\ell_k\}, ({\bf h}^\pm)$. Then from (\ref{dim-sum}) we have:
\begin{eqnarray}\label{ind+ind2}
    & & vdim\overline{\mathcal M}_{\Gamma^+} + vdim\overline{\mathcal M}_{\Gamma^-}  \nonumber\\
    & = &  \sum^k_{i=1}(n-1-\iota_{(h_i)}^\mathcal{Z}-\iota_{(h_i^{-1})}^\mathcal{Z})+c_1(p^!(A))+(3-n)(g-1)+m-\iota_{(\mathbf{g})}^{\widetilde{\mathcal{G}}}.
\end{eqnarray}

   Now we want to calculate $vdim\overline{\mathcal M}_{\Gamma^+}$. As in the proof of Theorem \ref{comthm1}, we assume $u^{\pm}: \Sigma^{\pm}\rightarrow\mathcal{G}^{\pm}$ has $l^{\pm}$ connected components $u^{\pm}_i: \Sigma^{\pm}_i\rightarrow\mathcal{G}^{\pm}$, $i=1,\ldots,l^{\pm}$
   and $\Sigma^{\pm}_i$ has antiemetic genus $g^{\pm}_i$, $g^{\pm}=\Sigma g^{\pm}_i$ with $m^{\pm}_i$ marked points. Then we have the same formula of $vdim\overline{\mathcal M}_{\Gamma^+}$ as (\ref{ind++}).
  Now we calculate $\sum c_1([u^+_i])$ in (\ref{ind++}) as follows:

   Observe that we obtain $\overline{\widetilde{\mathcal{G}}}^+$ by performing the symplectic cutting twice.
   We also note that $\overline{\widetilde{\mathcal{G}}}^+$ is independent of the order of these two orbifold symplectic cuttings. Therefore, if we commute the order of
   these two symplectic cuttings, it is easy to see:
   $$\mathbb{P}(\mathcal{O}_{(m_1,\ldots,m_n)}(-1)\oplus\mathcal{O})\cong\widetilde{W\mathbb{P}}(m_1,\ldots,m_n,1)$$
   where $\widetilde{W\mathbb{P}}(m_1,\ldots,m_n,1)$ is obtained by performing $(m_1,\ldots,m_n)$-weighted blow-up on the smooth point $[(0,\ldots,0,1)]$ of $W\mathbb{P}(m_1,\ldots,m_n,1)$.

   Let $E$ denotes the zero section of $\mathbb{P}(\mathcal{O}_{(m_1,\ldots,m_n)}(-1)\oplus\mathcal{O})$, and $H$ denotes the infinity section. Then we have:
   $$E\cong H\cong W\mathbb{P}(m_1,\ldots,m_n)$$

   Now we need to consider the cohomology group of $\widetilde{W\mathbb{P}}(m_1,\ldots,m_n,1)$.
   Godinho \cite{G} gave an interpretation of the underlying space of blow-up in terms of connected sum, see Lemma 5.1 of \cite{G}. The following Proposition is just the special case of $p=1$.

\begin{proposition}\label{connectsum}
     Suppose $\cG$ is a symplectic orbifold, the weighted ${\bf m}$-blow-up of $\cG$ at a smooth point $x$ is given by the connected sum:
$$
            \widetilde{\cG}=\cG \# W\mathbb{P}({\bf m},1).
$$
\end{proposition}

From this Proposition and (\ref{coh}), we have:
$$
         H^2(\widetilde{W\mathbb{P}}(m_1,\ldots,m_n,1))=\R\oplus\R
$$
   and the generator is easily to seen to be $\eta_E$ and $\eta_H$, where $\eta_E$ and $\eta_H$ is the Poincare dual of $E$ and $H$.
   Now we have:
   \begin{eqnarray*}
     (\sum c_1([u^+_i]))\cdot H & = & p^!(A)^+\cdot H=\sum\ell_i,\\
     (\sum c_1([u^+_i]))\cdot E & = & p^!(A)^+\cdot E=p^!(A)\cdot E=A\cdot p_*(E)=0.
   \end{eqnarray*}
   And we have an analog of Lemma \ref{chernlemma}:

\begin{lemma}\label{chernlemma-1}
\begin{equation}\label{chernclass-1}
      c_1(T\widetilde{W\mathbb{P}}(m_1,\ldots,m_n,1))=(\sum^n_{i=1}m_i+1)\eta_H-(\sum^n_{i=1}m_i-1)\eta_E.
\end{equation}
\end{lemma}

\begin{proof}
  Similar to the proof of Lemma \ref{chernlemma}, denote $W\mathbb{P}(\mathbf{m}):=W\mathbb{P}(m_1,\ldots,m_n)$,
   $\widetilde{W\mathbb{P}}(\mathbf{m}'):=\\ \widetilde{W\mathbb{P}}(m_1,\ldots,m_n,1)$.
   Suppose $c_1(\widetilde{W\mathbb{P}}(\mathbf{m}'))=a\eta_H+b\eta_E$.
 Consider the orbifold embedding induced by $E$:
$$
   s:\quad W\mathbb{P}(\mathbf{m})\quad\hookrightarrow \quad \widetilde{W\mathbb{P}}(\mathbf{m}')
$$
 which induces a map of cohomology groups:
$$
   s^*:\quad H^*(\widetilde{W\mathbb{P}}(\mathbf{m}'))\quad\longrightarrow \quad H^*(W\mathbb{P}(\mathbf{m})).
$$
 And we have:
$$
   s^{-1}(T\widetilde{W\mathbb{P}}(\mathbf{m}'))=T\widetilde{W\mathbb{P}}(\mathbf{m}')|_{E}=TW\mathbb{P}(\mathbf{m})\bigoplus \mathcal{N},
$$
 where $\mathcal{N}=\mathcal{N}_{E|\widetilde{W\mathbb{P}}(\mathbf{m}')}=\mathcal{O}_\mathbf{m}(-1)$. So via Proposition \ref{connectsum} and Whitney sum formula, we have:
\begin{equation}\label{T}
         s^*c_1(T\widetilde{W\mathbb{P}}(\mathbf{m}'))=\sum^n_{i=1}c_1(\mathcal{O}_\mathbf{m}(m_i))+c_1(\mathcal{O}_\mathbf{m}(-1))=(\sum^n_{i=1}m_i-1)e(\mathcal{O}_\mathbf{m}(1)).
\end{equation}
 Denote $\Phi(\mathcal{N})$ is the Thom class of orbi-bundle $\mathcal{N}$, by the well-known relation among Euler class, Thom class and Poincare dual, we have:
\begin{equation}\label{O(-1)}
         s^*(\eta_E)=s^*(\Phi(\mathcal{N}))=s^*(\Phi(\mathcal{O}_\mathbf{m}(-1)))=e(\mathcal{O}_\mathbf{m}(-1))=-e(\mathcal{O}_\mathbf{m}(1)).
\end{equation}
 Combining (\ref{T}) and (\ref{O(-1)}), we get
$$
         c_1(T\widetilde{W\mathbb{P}}(\mathbf{m}'))+(\sum^n_{i=1}m_i-1)\eta_E\in Ker(s^*).
$$
 Now we consider the long exact sequence
$$
       \cdots\rightarrow H^2(\widetilde{W\mathbb{P}}(\mathbf{m}'),E)\xrightarrow[]{j^*} H^2(W\mathbb{P}(\mathbf{m}'))
       \xrightarrow[]{s^*} H^2(W\mathbb{P}(\mathbf{m}))\rightarrow H^3(\widetilde{W\mathbb{P}}(\mathbf{m}'),E)\rightarrow\cdots
$$
Note that
 \begin{eqnarray*}
  H^2(\widetilde{W\mathbb{P}}(\mathbf{m}'),E)&= &H^2(W\mathbb{P}(\mathbf{m}'),U)\xlongequal[]{exision}
  H^2(\widetilde{W\mathbb{P}}(\mathbf{m}')-E,U-E)\\
  &= &H^2(\mathcal{N}_{H|\widetilde{W\mathbb{P}}(\mathbf{m}')},\mathcal{N}_{H|\widetilde{W\mathbb{P}}(\mathbf{m}')}-H),
 \end{eqnarray*}
 where $U$ is a small neighborhood near $E$ which is homotopic to $E$. Now we can know that $Ker(s^*)=j^*H^2(\widetilde{W\mathbb{P}}(\mathbf{m}'),E)$ is generated by $j^*\Phi(\mathcal{N}_{H|\widetilde{W\mathbb{P}}(\mathbf{m}')})=\eta_H$.
 Then we get:
$$
       b=-(\sum^n_{i=1}m_i-1).
$$
 We can use the same method to get:
$$
           a=\sum^n_{i=1}m_i+1.
$$
 Finally we have:
$$
     c_1(T\widetilde{W\mathbb{P}}(m_1,\ldots,m_n,1))=(\sum^n_{i=1}m_i+1)\eta_H-(\sum^n_{i=1}m_i-1)\eta_E.
$$
This completes the proof of the Lemma.
\end{proof}

Next, we come back to the proof of Theorem \ref{comthm2}. From (\ref{chernclass-1}), we have
$$
       \sum c_1([u^+_i])=(\sum^n_{i=1}m_i+1)(\sum^k_i\ell_i).
$$

The same argument as in proof of Theorem \ref{comthm1} shows that we only need to prove
\begin{eqnarray*}
    \frac{1}{2}\sum deg\alpha_i- vdim\overline{\mathcal M}_{\Gamma^-} & = & (\sum^n_{i=1}m_i)(\sum^k_{j=1} \ell_j)+(3-n)(g^+-l^+)+\sum^k_{j=1}\iota_{(h_j^{-1})}^\mathcal{Z}-(n-2)k \\
          & > &0.
\end{eqnarray*}
The proof of the last inequality is totally the same as the proof of Lemma \ref{noneq}. This completes the proof of the second comparison Theorem.

\end{proof}

Summarizing Theorem \ref{comthm1} and Theorem \ref{comthm2} and note that $A^- = p^!(A)^-$, we get the blow-up formula
\begin{theorem}
   If $g\leq 1$, $n\geq 2$, then
$$
      \Psi^{\mathcal{G}}_{(A,g,m,(\mathbf{g}))}(\alpha_1,\ldots,\alpha_m)=\Psi^{\widetilde{\mathcal{G}}}_{(p^!(A),g,m,(\mathbf{g}))}(p^*(\alpha_1),\ldots,p^*(\alpha_m)).
$$
 \end{theorem}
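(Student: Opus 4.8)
The plan is to read off the identity by concatenating the two comparison theorems just proved, Theorem~\ref{comthm1} and Theorem~\ref{comthm2}, through their common middle term: the Gromov--Witten invariant of $\widetilde{\mathcal G}$ relative to the exceptional divisor $\mathcal Z\cong W\mathbb P(m_1,\dots,m_n)$.

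First I would invoke Theorem~\ref{comthm1}. Performing the $\mathbf m$-weighted symplectic cut of $\mathcal G$ at $P_0$ gives $\overline{\mathcal G}^+=W\mathbb P(m_1,\dots,m_n,1)$ and $\overline{\mathcal G}^-=\widetilde{\mathcal G}$, and under $g\le 1$, $n\ge 2$ one has
$$
\Psi^{\mathcal G}_{(A,g,m,(\mathbf g))}(\alpha_1,\dots,\alpha_m)=\Psi^{(\overline{\mathcal G}^-,\mathcal Z)}_{(A^-,g,m,(\mathbf g))}(\alpha_1,\dots,\alpha_m).
$$
Next I would invoke Theorem~\ref{comthm2}. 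The trivial-weight cut of $\widetilde{\mathcal G}$ along $\mathcal Z$ has negative side $\overline{\widetilde{\mathcal G}}^-=\widetilde{\mathcal G}$, and under the same hypotheses
$$
\Psi^{\widetilde{\mathcal G}}_{(p^!(A),g,m,(\mathbf g))}(p^*\alpha_1,\dots,p^*\alpha_m)=\Psi^{(\overline{\mathcal G}^-,\mathcal Z)}_{(p^!(A)^-,g,m,(\mathbf g))}(\alpha_1,\dots,\alpha_m).
$$
It then remains only to check that the two right-hand sides coincide, i.e.\ that $A^-=p^!(A)^-$ as classes in $H_2(|\widetilde{\mathcal G}|,\mathbb Z)$. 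Here I would argue that, since $p\colon\widetilde{\mathcal G}\to\mathcal G$ is an orbifold isomorphism away from $P_0$ and $p^!(A)=PD\,p^*PD(A)$ agrees with $A$ on $\mathcal G\setminus\{P_0\}$, a representative of $A$ chosen away from the Darboux ball at $P_0$ lifts to a curve in $\widetilde{\mathcal G}\setminus\mathcal Z$ representing exactly $p^!(A)$; the surviving components in Theorem~\ref{comthm1} carry no relative marked points, so this is precisely the class labelled $A^-$. In the second cut $p^!(A)^-$ is likewise represented by a curve of class $p^!(A)$ disjoint from $\mathcal Z$, so $p^!(A)^-=p^!(A)$ as well. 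Hence both middle terms are the relative invariant of $(\widetilde{\mathcal G},\mathcal Z)$ with homology class $p^!(A)$ and no relative insertions, and the theorem follows by transitivity of equality.

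I do not anticipate a serious obstacle: the analytic and cohomological content — the dimension estimate of Lemma~\ref{noneq} that kills the relative contributions with $k>0$, together with the Chern-class computations of Lemma~\ref{chernlemma} and Lemma~\ref{chernlemma-1} — is already absorbed into Theorems~\ref{comthm1} and~\ref{comthm2}. The only point requiring any care is the homology bookkeeping under the two successive symplectic cuts, namely the identification $A^-=p^!(A)^-$; once that is settled the proof is a one-line concatenation.
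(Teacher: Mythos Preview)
Your proposal is correct and matches the paper's approach exactly: the paper simply states ``Summarizing Theorem~\ref{comthm1} and Theorem~\ref{comthm2} and note that $A^- = p^!(A)^-$'' and declares the theorem. In fact you supply more justification for the identification $A^- = p^!(A)^-$ than the paper itself does.
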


 From Remark \ref{remark2}, we can also conclude that:
\begin{theorem}\label{main}
   If $dim_{\mathbb{R}} \cG=$4 or 6, then
$$
      \Psi^{\mathcal{G}}_{(A,g,m,(\mathbf{g}))}(\alpha_1,\ldots,\alpha_m)=\Psi^{\widetilde{\mathcal{G}}}_{(p^!(A),g,m,(\mathbf{g}))}(p^*(\alpha_1),\ldots,p^*(\alpha_m)).
$$
 \end{theorem}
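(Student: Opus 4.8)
The hypothesis $\dim_{\mathbb R}\cG = 4$ or $6$ is exactly the condition $n = 2$ or $n = 3$, and in this range the proofs of Theorem \ref{comthm1} and Theorem \ref{comthm2} carry over with no restriction on the genus. The plan is therefore: first record why the dimension estimate of Lemma \ref{noneq} is insensitive to the genus once $n \le 3$; then re-run the proof of Theorem \ref{comthm1} with this stronger input to obtain $\Psi^{\cG}_{(A,g,m,(\mathbf g))}(\alpha_1,\ldots,\alpha_m) = \Psi^{(\overline{\cG}^-,\mathcal{Z})}_{(A^-,g,m,(\mathbf g))}(\alpha_1,\ldots,\alpha_m)$ for every $g$; then do the same for Theorem \ref{comthm2}; and finally compose the two identities, using $A^- = p^!(A)^-$.

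For the first step I would go through the proof of Lemma \ref{noneq} once more. The quantity to be bounded there is the sum of a genus-free part, the inequality $(\sum_{i=1}^n m_i)\ell_j + \iota^{\mathcal{Z}}_{(h_j^{-1})} - (n-1) > 0$ established by the explicit chart computation on $W\mathbb P(m_1,\ldots,m_n,1)$, and the term $(3-n)(g^+ - l^+) + k$, which is the only place the genus appears. When $n = 3$ this term is simply $k > 0$; when $n = 2$ it equals $(g^+ - l^+) + k$, and since every connected component of $u^+$ carries at least one relative marked point we have $k \ge l^+$, so $(g^+ - l^+) + k \ge g^+ \ge 0$. In neither case is any bound on $g$ used, which is precisely the assertion of Remark \ref{remark2}.

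With Lemma \ref{noneq} now available for all $g$ when $n = 2, 3$, I would repeat the proof of Theorem \ref{comthm1} verbatim: perform the $\mathbf m$-weighted symplectic cut at $P_0$, apply the degeneration formula together with Remark \ref{remark}(2), use that the insertions $\alpha_i$ are supported away from $P_0$ to force $m^+ = 0$, and then invoke the genus-free Lemma \ref{noneq} to see that every contributing gluing type $\Gamma$ must have $k = 0$; this leaves only $\Gamma = \{A^-, g, m, (\mathbf g)\}$ and yields the first comparison identity. The proof of Theorem \ref{comthm2} is repeated in the same way: cut with trivial weight along the exceptional divisor $E$, identify $\overline{\widetilde{\cG}}^+$ with $\widetilde{W\mathbb P}(m_1,\ldots,m_n,1)$, compute $\sum c_1([u^+_i])$ via Lemma \ref{chernlemma-1}, and reduce once more to the inequality of Lemma \ref{noneq}, now valid for all $g$; this gives $\Psi^{\widetilde{\cG}}_{(p^!(A),g,m,(\mathbf g))}(p^*\alpha_1,\ldots,p^*\alpha_m) = \Psi^{(\overline{\cG}^-,\mathcal{Z})}_{(p^!(A)^-,g,m,(\mathbf g))}(\alpha_1,\ldots,\alpha_m)$. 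Chaining the two identities through $A^- = p^!(A)^-$ produces the stated formula.

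The one genuine point to verify is that dropping the genus bound really does not weaken the dimension count in Lemma \ref{noneq}, and this is guaranteed exactly by $n \le 3$: for $n = 3$ the genus term drops out entirely, and for $n = 2$ it is absorbed by the inequality $k \ge l^+$. Beyond this, the proof is a direct repetition of the two comparison theorems already proved above, so I expect no further obstacle.
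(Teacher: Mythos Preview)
Your proposal is correct and follows exactly the paper's approach: the paper's proof is the single line ``From Remark \ref{remark2}, we can also conclude that\ldots'', and your argument simply unpacks that remark by showing that when $n=2$ or $3$ the term $(3-n)(g^+-l^+)+k$ in Lemma \ref{noneq} is nonnegative without any hypothesis on $g$ (trivially for $n=3$, and via $k\ge l^+$ for $n=2$), so that both comparison Theorems \ref{comthm1} and \ref{comthm2} go through for arbitrary genus and combine as before.
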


 \begin{remark}
   Unfortunately, we do not know wether Theorem \ref{main} holds when $n=1$ or $g\geq 2$, since Lemma \ref{noneq} may fail in that case. We conjecture that the condition $g\leq 1$ is a technical one, but $n\geq 2$ is not. We will study this problem in the future.
 \end{remark}

 \section{Application}

 In this section, we will give an application of our main result Theorem \ref{main}. Uniruledness is an important property in the birational classification of algebraic manifold. In \cite{HLR}, Hu, Li and Ruan introduces the concept of symplectic uniruledness via Gromov-Witten invariant: a symplectic manifold $(M, \omega)$ is symplectic  uniruled if there is a nonzero genus zero GW invariant involving a point class. 

 Note that the symplectic uniruledness is easily to generalize to orbifold, we can obtain a result about uniruledness in the category of orbifold:
 \begin{corollary}
   If the condition of $\cG$ and $\widetilde{\cG}$ are as in Theorem \ref{main}, then
   \begin{center}
   $\cG$ is symplectic uniruled $\Rightarrow$ $\widetilde{\cG}$ is symplectic uniruled.
   \end{center}

 \end{corollary}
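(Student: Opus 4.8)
The plan is to unwind the definition of symplectic uniruledness and feed it directly into the blow-up formula of Theorem~\ref{main}. Suppose $\cG$ is symplectic uniruled. Then there exist a class $A\in H_2(|\cG|,\Z)$, an integer $m\geq 1$, conjugacy data $(\mathbf{g})$, and Chen--Ruan classes $\alpha_1,\dots,\alpha_m\in H^*_{CR}(\cG)$, one of which --- say $\alpha_1=PD[q]$ --- is a point class (so $q\in|\cG|$ and $\alpha_1$ lies in the top cohomology of the non-twisted sector $\cG_{(1)}$), such that $\Psi^{\cG}_{(A,0,m,(\mathbf{g}))}(\alpha_1,\dots,\alpha_m)\neq 0$. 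Since the invariant depends only on the cohomology class $\alpha_1$, and since on a connected orbifold any two point classes coincide, I may and do pick the representative point $q$ to lie in $|\cG|\backslash\{P_0\}$, away from the blown-up point.

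First I would apply Theorem~\ref{main}: under the dimension hypothesis $\dim_{\R}\cG=4$ or $6$ and for genus $g=0$, it yields
$$
\Psi^{\cG}_{(A,0,m,(\mathbf{g}))}(\alpha_1,\dots,\alpha_m)=\Psi^{\widetilde{\cG}}_{(p^!(A),0,m,(\mathbf{g}))}(p^*\alpha_1,\dots,p^*\alpha_m),
$$
so the right-hand side is nonzero. It remains to identify $p^*\alpha_1$. Because $p:\widetilde{\cG}\to\cG$ is a weighted blow-up at the smooth point $P_0$, it restricts to an orbifold isomorphism over $|\cG|\backslash\{P_0\}$ (cf. the description of $p$ in Section~2.4, $p^{-1}(\C^{n+1}\backslash\{0\})\cong\C^{n+1}\backslash\{0\}$); in particular $p^{-1}(q)$ is a single smooth point $\widetilde q$, and $p$ is a diffeomorphism near $\widetilde q$, whence $p^*PD_{\cG}[q]=PD_{\widetilde{\cG}}[\widetilde q]$, again a point class of $\widetilde{\cG}$ living in the non-twisted sector.

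Putting these together, $\Psi^{\widetilde{\cG}}_{(p^!(A),0,m,(\mathbf{g}))}(p^*\alpha_1,\dots,p^*\alpha_m)$ is a nonzero genus zero Gromov--Witten invariant of $\widetilde{\cG}$ one of whose insertions, $p^*\alpha_1$, is a point class; by definition this says exactly that $\widetilde{\cG}$ is symplectic uniruled. The argument is essentially formal once Theorem~\ref{main} is available --- it is the orbifold analogue of the manifold reasoning in \cite{HLR} --- and there is no serious obstacle; the only point demanding any care is the identification $p^*[\mathrm{pt}]=[\widetilde{\mathrm{pt}}]$, which is precisely why one arranges the point insertion to be supported away from $P_0$ before invoking the blow-up formula. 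The needed genus and dimension restrictions are exactly those already guaranteed by Theorem~\ref{main}.
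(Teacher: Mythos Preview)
Your proof is correct and follows essentially the same approach as the paper's own proof: both unwind the definition of symplectic uniruledness to obtain a nonzero genus-zero invariant with a point insertion, then apply Theorem~\ref{main} directly. The paper's argument is terser --- it simply writes $[pt]$ on both sides without justifying $p^*[pt]=[pt]$ --- whereas you supply the extra care of locating the point away from $P_0$ and using that $p$ is an isomorphism there; this is a welcome elaboration rather than a genuinely different route.
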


 \begin{proof}
    Suppose $\cG$ is symplectic uniruled, then $\exists A \in H_2(|\cG|,\mathbb{Z})$ and $\alpha_2,\ldots,\alpha_m \in H^*(\cG)$, such that  $\Psi^{\mathcal{G}}_{(A,g,m,(\mathbf{g}))}([pt],\alpha_2,\ldots,\alpha_m)\neq 0$. By Theorem \ref{main}, we have $$\Psi^{\widetilde{\mathcal{G}}}_{(p^!(A),g,m,(\mathbf{g}))}([pt],p^*(\alpha_2),\ldots,p^*(\alpha_m))\neq 0$$. So $\widetilde{\cG}$ is symplectic uniruled.
 \end{proof}

\vskip .2in
\noindent
{\bf Acknowledgments} The authors like to thank Bohui Chen for  many invaluable discussions during the course of this work. The first author thanks Department of Mathematics, University of Michigan
and the second author thanks Max-Planck Institute for Mathematics at Bonn for their hospitality during part of writing of this paper.

  \end{document}